\theoremstyle{plain} \numberwithin{equation}{section}
\newtheorem{theorem}{Theorem}[section]
\newtheorem{proposition}[theorem]{Proposition}
\theoremstyle{definition}
\newtheorem{definition}[theorem]{Definition}
\newtheorem{remark}[theorem]{Remark}
\title{Split canonical relations}
\author{Alberto S. Cattaneo and Ivan Contreras}
\address{Institut f\"ur Mathematik, Universit\"at Z\"urich Irchel, Winterthurerstrasse 190, CH-8057 Z\"urich, Switzerland}
\address{Department of Mathematics, Amherst College, Amherst, Massachusetts  01002, USA}
\email{cattaneo@math.uzh.ch, icontreraspalacios@amherst.edu}
\begin{document}

\begin{abstract}
A Lagrangian subspace $L$ of a weak symplectic vector space is called \emph{split Lagrangian} if it has an isotropic (hence Lagrangian) complement.  When the symplectic structure is strong, it is sufficient for $L$ to have a closed complement, which can then be moved to become isotropic.
The purpose of this note is to develop the theory of compositions and reductions of split canonical relations for symplectic vector spaces. We give conditions on a coisotropic subspace $C$ of a weak symplectic space $V$  which imply that the induced canonical relation $L_C$ from $V$ to $C/C^{\omega}$ is split, and, from these, we find sufficient  conditions  for split canonical relations to compose well. We prove that the canonical relations arising in the Poisson sigma model from the Lagrangian field theoretical approach (\cite {Relational, Lagrangian}) are split, giving a description of symplectic groupoids integrating Poisson manifolds in terms of split canonical relations.
\end{abstract}

\maketitle

\tableofcontents

\section{Introduction}
The functorial description of  classical and quantum field theories requires enhancement of the usual symplectic category, allowing infinite dimensional objects (weak symplectic manifolds), and allowing general canonical relations, not just symplectomorphisms, as morphisms. As  is explained in \cite{Weinstein}, the construction of such an extended symplectic category encounters several obstacles,  even in finite dimensions, and many alternatives have been proposed over time. These constructions face even more technical difficulties when infinite dimensional spaces are taken into account.

One of the problems arises from  the nature of the objects of this extended category: the manifolds which model the space of fields for classical field theories are in general Fr\'echet manifolds, and conventional tools such as the inverse function theorem, Frobenius theorem, etc.~fail to hold in general in the Fr\'echet category.

Another inconvenience, which is a crucial impediment to establishing functorial quantization, is the fact that the composition of canonical relations is not well defined in general. In the finite dimensional case they compose well when certain transversality conditions are satisfied; in infinite dimensions, we can find examples of canonical relations whose transversal composition is isotropic but not coisotropic (see e.g. \cite{Con}).

The objective of this paper is to analyze the special case in which we allow the objects to be (infinite dimensional) weak symplectic vector spaces and the morphisms to be \emph{split canonical relations}, which are isotropic subspaces with isotropic complements. This notion of Lagrangian subspace was first considered  in the context of infinite dimensional symplectic linear spaces in \cite{Weinstein2}, and it happens to coincide with the conventional notion (maximal isotropic) in the finite dimensional case (see e.g. \cite{Arnold, Weinstein2}). For the infinite dimensional case, we  determine sufficient conditions for split canonical relations to compose well and to have well defined reductions.

Split Lagrangian subspaces appear naturally in the Hamiltonian formulation for Lagrangian field theories with boundary, following the BV-BFV formalism, as in \cite{Lagrangian}. This type of field theory naturally produces an isotropic submanifold $L$ of the symplectic space of boundary  fields $\mathcal F_{\partial}$.
A \emph{good} choice of a boundary conditions for this theory consists of the choice of a submanifold  $L^{'}$ on which the Noether 1-form vanishes and which intersects $L$ in isolated points (the solutions to the Euler-Lagrange system of equations).
\newline
For this choice of boundary conditions, we want  to insure that the variational problem has no boundary contributions.
This condition implies first of all that $L^{'}$ must be isotropic.  Globally, this condition is even stronger, as it requires the vanishing of the class of the Noether 1-form (changing it  by exact terms is allowed as it corresponds to change the action functional by a boundary term). 
Locally, at a point of $L\cap L^{'}$, the tangent spaces to $L$ and to $L^{'}$ must then be complementary isotropic subspaces.

The last part of the paper (Section 6) is devoted to proving that the evolution relations arising from the Poisson sigma model (PSM), a two dimensional topological theory, are split canonical relations obeying a \emph{neat intersection} condition which allows compositions and reductions. These considerations help us to provide an alternative and shorter proof that such evolution relations are Lagrangian for the PSM case \cite{Relational, Con} and that the reduced version of these relations determines a \emph{split} version of the conventional symplectic groupoids for integrable Poisson manifolds.
We conjecture that the split condition for canonical relations is satisfied in a larger class of classical field theories with boundary. In particular, we intend to  study the case in which the symplectic space of fields happens to be reflexive. 

The study of split Lagrangian spaces can be naturally extended to the framework of Banach manifolds, for which the existence of split Lagrangian submanifolds implies the existence of complementary isotropic smooth distributions, which play an important role in the symplectic formulation of field theories with boundary.
\subsection*{Acknowledgments}
The authors thank Alan Weinstein for his useful insights at different stages of this project, in particular for the idea of the proof of Theorem 2.10 and the finite dimensional case (Section 3.2). A. S. C. acknowledges partial support of SNF Grant No. 200020\_172498/1. This research was (partly) supported by the NCCR SwissMAP, funded by the Swiss National Science Foundation, and by the COST Action MP1405 QSPACE, supported by COST (European Cooperation in Science and Technology). I.C. was partially supported by SNF Grant P300P2-154552. 
\section{Definitions and basic properties}

In this article, we consider topological vector spaces over $\mathbb R$.
Thus, the dual space $V^*$ of $V$ is the space of \emph{continuous} linear functionals of $V$.
The topology for Banach and Hilbert spaces is the metric topology, and the topology for Fr\'echet  spaces is the initial topology, i.e. the coarsest topology for which all the seminorms are continuous\footnote{With this topology, the Fr\'echet spaces are particular instances of locally convex spaces.}.

\begin{definition} A closed subspace $W$ of a vector space $V$ is called \emph{split} if it has a closed complement $K$ in $V$.
\end{definition}
\begin{definition} A (possibly infinite dimensional) vector space $V$ is called \emph{weak symplectic} if it is equipped with a skew symmetric bilinear form $\omega$ such that the induced linear map $\omega^{\sharp}\colon V \to V^{*}$ is injective. It is \emph{symplectic} if $\omega^{\sharp}$ is an isomorphism.
\end{definition}
Note that if the vector space $V$ is symplectic, then $(V^*)^*\cong V^*\cong V$, thus $V$ is reflexive. This does not necessarily hold for weak symplectic spaces (Proposition \ref{properties}).

$W^{\omega}$ denotes the symplectic orthogonal subspace of a subspace $W$ in a weak symplectic vector space.  It is necessarily closed, since it is determined by the condition $\omega(W, \bullet )=0$, and $\omega$ is continuous.
We leave to the reader the proof of the following useful properties of symplectic orthogonality.

\begin{proposition} \label{properties}\emph{ 
Let $V$ be a weak symplectic space and $W$, $Z$ subspaces of $V$. Then:
\begin{enumerate}
 \item $W \subseteq Z \Longrightarrow Z^{\omega} \subset W^{\omega}$.
\item $(W+Z)^{\omega} = W^{\omega} \cap Z^{\omega}$.
\item $(W \cap Z)^{\omega} \supseteq W^{\omega}+ Z^{\omega} $.
\item $W \subseteq W^{\omega \omega}$.
\item $W^{\omega}= W^{\omega \omega \omega}$.
\end{enumerate}
}
\end{proposition}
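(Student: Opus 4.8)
The plan is to check all five properties directly from the defining condition $W^{\omega}=\{v\in V:\omega(v,w)=0\text{ for all }w\in W\}$, using nothing beyond the bilinearity and skew-symmetry of $\omega$. In particular, neither the topology on $V$ nor the injectivity of $\omega^{\sharp}$ plays any role here; these are purely algebraic facts about an arbitrary bilinear form, and I expect each of (1)--(4) to be a one-line verification, with (5) following formally from (1) and (4).

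First I would dispatch (1): if $W\subseteq Z$ and $v\in Z^{\omega}$, then $\omega(v,w)=0$ for every $w\in W\subseteq Z$, hence $v\in W^{\omega}$. Property (2) then follows by combining (1) with bilinearity: the inclusion $\subseteq$ comes from applying (1) to $W\subseteq W+Z$ and $Z\subseteq W+Z$, while if $v\in W^{\omega}\cap Z^{\omega}$ then $\omega(v,w+z)=\omega(v,w)+\omega(v,z)=0$, giving the reverse inclusion. For (3), since $W\cap Z\subseteq W$ and $W\cap Z\subseteq Z$, property (1) yields $W^{\omega}\subseteq(W\cap Z)^{\omega}$ and $Z^{\omega}\subseteq(W\cap Z)^{\omega}$; as $(W\cap Z)^{\omega}$ is a linear subspace, it contains the sum $W^{\omega}+Z^{\omega}$. (Equality can genuinely fail, which is why the statement only claims an inclusion.) For (4), given $w\in W$ and any $v\in W^{\omega}$, skew-symmetry gives $\omega(w,v)=-\omega(v,w)=0$, so $w$ annihilates all of $W^{\omega}$, i.e.\ $w\in W^{\omega\omega}$.

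It remains to deduce (5). Applying (4) to the subspace $W^{\omega}$ in place of $W$ gives $W^{\omega}\subseteq(W^{\omega})^{\omega\omega}=W^{\omega\omega\omega}$. For the reverse inclusion, feed the inclusion $W\subseteq W^{\omega\omega}$ from (4) into the inclusion-reversing property (1), obtaining $(W^{\omega\omega})^{\omega}\subseteq W^{\omega}$, that is, $W^{\omega\omega\omega}\subseteq W^{\omega}$. The two inclusions together give $W^{\omega}=W^{\omega\omega\omega}$.

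Honestly, there is no real obstacle in this proposition: it is formal linear algebra, and the author's instinct to leave it to the reader is sound. The only points deserving a moment's care are getting the direction of the two inclusions in (5) right (one uses (1) applied to the output of (4), the other uses (4) applied to the input of (1)), and resisting the temptation to upgrade (3) to an equality, since that requires finite-dimensionality (or at least reflexivity and closedness hypotheses) that are not available in the weak symplectic setting.
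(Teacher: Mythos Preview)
Your proof is correct and is exactly the standard direct verification one would expect; the paper in fact leaves this proposition to the reader, so there is no alternative approach to compare against. Your remarks on where care is needed (the two directions in (5), and not upgrading (3) to an equality) are apt.
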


In the symplectic (e.g.~finite dimensional weak symplectic) case, we have a slightly stronger result.

\begin{proposition} \label{propertiessymplectic}\emph{ 
When $V$ is symplectic and $W$ is a closed subspace, the inequality (4) above becomes
an equality.}
\end{proposition}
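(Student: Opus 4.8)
The plan is to reduce the statement to the classical fact that a closed subspace of a locally convex Hausdorff space equals the pre-annihilator of its annihilator, transported through the isomorphism $\omega^{\sharp}$. For a subspace $W\subseteq V$ let $W^{\circ}:=\{\xi\in V^{*}:\xi|_{W}=0\}\subseteq V^{*}$ denote its annihilator, and for a subspace $A\subseteq V^{*}$ let $A_{\circ}:=\{v\in V:\xi(v)=0\text{ for all }\xi\in A\}\subseteq V$ denote its pre-annihilator. The first step is to record two identities valid in any weak symplectic space, both obtained by unwinding the definitions: $W^{\omega}=(\omega^{\sharp})^{-1}(W^{\circ})$, since $\omega(v,w)=0$ for all $w\in W$ says precisely that $\omega^{\sharp}(v)$ kills $W$; and, using skew-symmetry, $W^{\omega\omega}=\bigl(\omega^{\sharp}(W^{\omega})\bigr)_{\circ}$, since $v\in W^{\omega\omega}$ means $\omega^{\sharp}(u)(v)=0$ for every $u\in W^{\omega}$.

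The second step is where the hypothesis that $V$ is \emph{symplectic} (and not merely weak symplectic) enters: because $\omega^{\sharp}$ is onto, $\omega^{\sharp}(W^{\omega})=\omega^{\sharp}\bigl((\omega^{\sharp})^{-1}(W^{\circ})\bigr)=W^{\circ}$, surjectivity being exactly what is needed to recover all of $W^{\circ}$ rather than just its intersection with the image of $\omega^{\sharp}$. Combining this with the two identities of the previous step gives $W^{\omega\omega}=(W^{\circ})_{\circ}$.

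It then remains to show $(W^{\circ})_{\circ}=W$. The inclusion $W\subseteq(W^{\circ})_{\circ}$ is tautological, and $(W^{\circ})_{\circ}$ is closed, being an intersection of kernels of continuous functionals, so $\overline{W}\subseteq(W^{\circ})_{\circ}$. For the reverse inclusion I would invoke the Hahn--Banach separation theorem: if $v\notin\overline{W}$, then since $\overline{W}$ is a closed convex subset of the locally convex Hausdorff space $V$, there is a continuous functional $\xi\in V^{*}$ vanishing on $\overline{W}$ with $\xi(v)\neq 0$; such $\xi$ lies in $W^{\circ}$ and witnesses $v\notin(W^{\circ})_{\circ}$. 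Hence $(W^{\circ})_{\circ}=\overline{W}$, which equals $W$ since $W$ is closed, and therefore $W^{\omega\omega}=W$.

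I do not expect a serious obstacle; this is a routine duality argument. The only points needing care are (i) that local convexity of $V$, which holds for the Banach, Hilbert and Fr\'echet spaces under consideration, is what makes Hahn--Banach available, and (ii) being explicit that it is the surjectivity of $\omega^{\sharp}$ — and not any continuity property of its inverse, which is not used — that the proof exploits, this being precisely the feature that can fail in the general weak symplectic setting and hence the reason the inclusion $W\subseteq W^{\omega\omega}$ may there be strict.
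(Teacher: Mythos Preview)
Your proof is correct and follows essentially the same approach as the paper's: both hinge on Hahn--Banach (to separate a point outside the closed subspace $W$ from $W$) together with the surjectivity of $\omega^{\sharp}$ (to realize the separating functional as $\omega(z,\cdot)$ for some $z\in W^{\omega}$). The paper argues this directly by contradiction, while you package the same ingredients through the annihilator/pre-annihilator identities $W^{\omega}=(\omega^{\sharp})^{-1}(W^{\circ})$ and $W^{\omega\omega}=(W^{\circ})_{\circ}$, but the substance is identical.
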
 

\proof{Suppose that the inequality $(4)$ is   strict.   Then there is an element $v$ of $V\backslash W$ for which the linear functional $\omega(v,\cdot)$ annihilates $W^\omega$.  On the other hand, since $W$ is closed, by the Hahn-Banach theorem (which holds in any Fr\'echet space \cite{Rudin}), there is a linear functional which takes the value 1 on $v$ and annihilates $W$.   
Since $\omega$ is symplectic, this linear functional is realized by some $z\in V$; i.e. $z\in  W^\omega$ and 
$\omega(z,v)$ = 1.  But this contradicts the assumption that $\omega(v,\cdot)$ annihilates $W^\omega$.\\
\qed}

\definition{A subspace $L$ of a weak symplectic vector space $V$ is called \emph{isotropic} if $L\subseteq L^{\omega}$, \emph{coisotropic} if $L^{\omega}\subseteq L$ and \emph{Lagrangian} if $L=L^{\omega}$.}

\begin{remark} Note that a subspace $L$ is Lagrangian if and only if it is maximal isotropic, and there are many of these subspaces, by Zorn's lemma.  A Lagrangian subspace is closed because it is the symplectic orthogonal space of itself.
\end{remark}

\proposition{\label{Isosplitting}
If $V = L \oplus L'$, where $L$ and $L'$ are isotropic, then $L$ and $L'$ are each Lagrangian.  (In particular, they are closed.)}   

\proof{If $L$ were properly contained in an isotropic subspace $L_1$, then $L_1$ would have a nontrivial intersection with $L'$, and any element of this intersection would be symplectically orthogonal to both $L$ and $L'$, in contradiction to the injectivity of $\omega^\sharp.$  This shows that $L$ is maximal isotropic, i.e. Lagrangian.   Of course, the same argument applies to $L'$.  \qed}\\

\begin{remark}We will call a subspace $L$ \emph{split Lagrangian} if it is Lagrangian and has a Lagrangian complement.  For 
example, the spaces $L$ and $L'$ in Proposition \ref{Isosplitting} are both split Lagrangian. We will also refer to $(L,L')$ as a 
splitting l-pair.
\end{remark}

Theorem \ref{Split} below shows that, when the ambient space is symplectic, and not just weakly symplectic,  any closed complement to a Lagrangian subspace can be moved to become Lagrangian itself.  Thus, in the symplectic case, 
the two possible interpretations of the term ``split Lagrangian subspace" are equivalent; i.e. a subspace which is split and Lagrangian is split Lagrangian.  We do not know whether this remains true in the general weakly symplectic case.  


\lemma{\label{Splitting} If $W\oplus Z$ is a splitting of a symplectic space $V$ by closed subspaces, then $W^{\omega} \oplus Z^{\omega}$ is also a splitting of $V$ by closed subspaces}.
\proof{The splitting $V = W \oplus Z$ leads to a splitting $V^* = W^0 \oplus Z^0$ of $V^*$ into the sum of the annihilators of $W$ and $Z$ respectively.   The inverse of the isomorphism $\omega^\sharp$ then takes $W^0$ to $W^\omega$ and $Z^0$ to $Z^\omega$, giving the required splitting of $V$.
\qed 
}\\

\theorem{\label{Split} If $V$ is symplectic, any Lagrangian subspace $L\subseteq V$ which is split as a subspace has a Lagrangian complement $L^c$ in $V$ and is hence $(L, L^c)$ is a splitting l-pair.}
\proof{ Let $K$ be a closed complement to $L$ in $V$. By Lemma \ref{Splitting},
$K^\omega$ is also a closed complement to $L$.

We define the \emph{average} of $K$ and $K^{\omega}$ to be
$$L^c:= \{\frac 1 2 (k + k^{'}): \, k \in K, \, k^{'} \in K^{\omega}, 
k-k^{'}\in L\}.$$
 By Proposition \ref{Isosplitting}, we just have to prove that $L^c$ is an isotropic complement to $L$ in $V$.
\begin{figure}[ht]
    \centering
    \def\svgwidth{200 pt}
    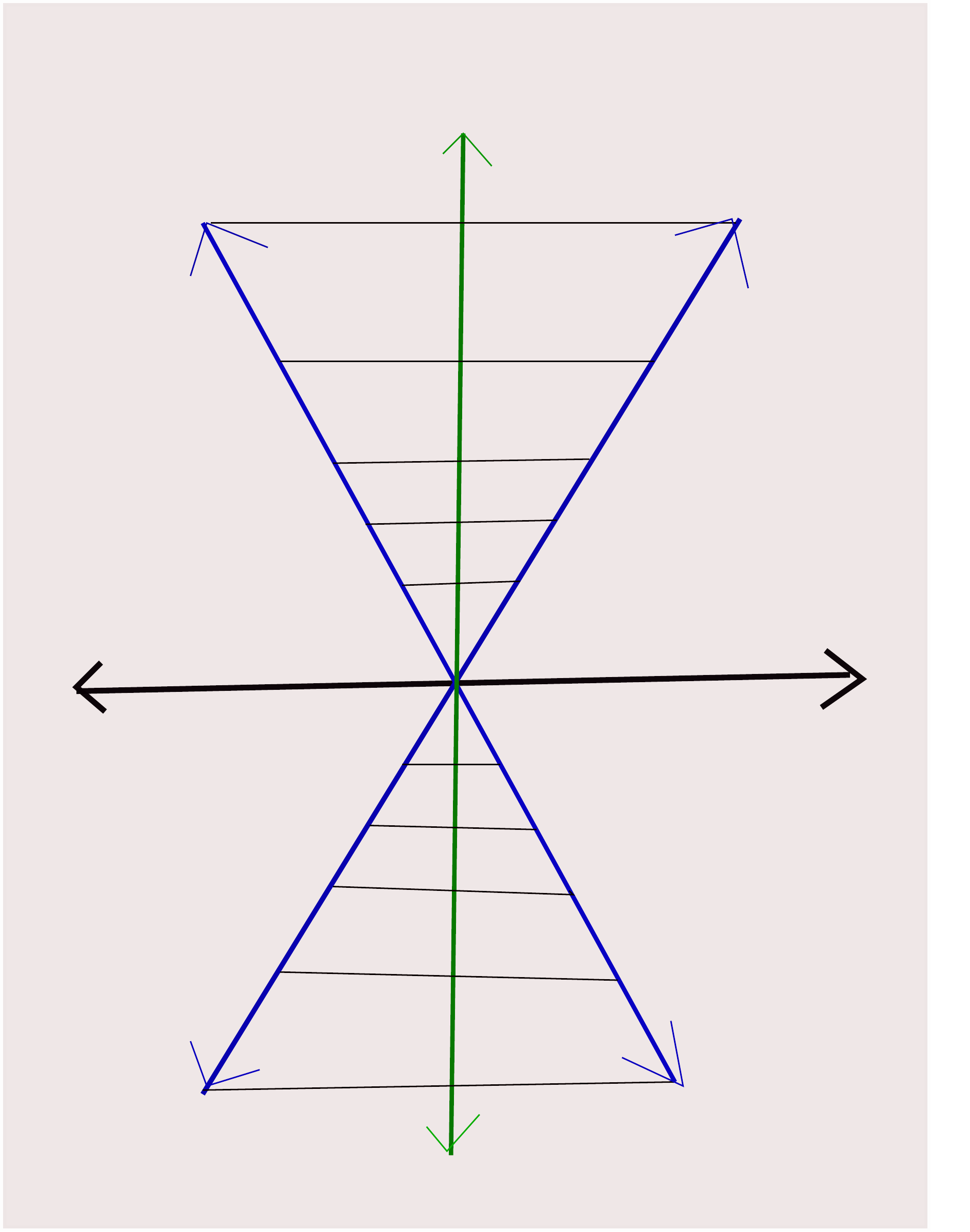
    \caption{The average construction for a split weak symplectic vector space $V$.}
    \label{fig:Corner}
\end{figure}

We first prove that it is a complement.   If $v$ in $L^c$ belongs to $L$ as well, then we must have $k+k'$ in the definition above both belonging to $L$.   But $k-k'$ belongs to $L$ as well, so $k$ and $k'$ belong to $L$.    But, since $K$ and $K'$ are complements to $L$, $k$ and $k'$ must be zero.  Thus, $L^c \cap L= \{ 0\}.$
On other hand, any element $v$ of $V$ may be expressed as either $l+k$ or $l' + k'$, where $l$ and $l'$ belong to $L$, $k \in K$, and $k' \in K'$.  Then $k - k' = l' -l$ belongs to $L$, and 
so $l_c = \frac 1 2  k + \frac 1 2 k'$ belongs to $L^c$.   Now $v$ is the average $\frac 1 2 (l + k + l' +k')$ of its two expressions, which is the sum of $l_c$ and an element of $L$.  Thus $L^c \oplus L = V.$

We will now verify that $L^c$ is isotropic. Let $l_1^c:=\frac 1 2 k_1+ \frac 1 2 k_1^{'}$ and $l_2^c:=\frac 1 2 k_2+ \frac 1 2 k_2^{'} $ be elements in $L^c$.
Then,
\begin{eqnarray*}
\omega(l_1^c, l_2^c)&=& \omega (k_1+ \frac 1 2 (k_1^{'}-k_1),k_2+ \frac 1 2 (k_2^{'}-k_2))\\
&=& \omega(k_1, k_2)+ \frac 1 2 \omega (k_1, (k_2^{'}-k_2))-\frac 1 2 \omega(k_2, (k_1^{'}-k_1))\\
&=& \frac 1 2 \omega (k_1, k_2^{'})-\frac 1 2 \omega (k_2, k_1^{'})\\
&=& 0-0=0. 
\end{eqnarray*}
\qed
}
\remark{In \cite{KS}, the authors produce an example of a symplectic Banach space $V$ in which no subspace is split Lagrangian.  By Theorem \ref{Split}, these subspaces do not admit any closed complements at all.}\\

\definition{A  \emph{ weak canonical relation} $L$ between two weakly symplectic vector spaces $V$ and $W$, denoted by $L\colon V \nrightarrow W $, is a  Lagrangian subspace of $\overline{V}\oplus W$}.

\definition{A \emph{split weak canonical relation} $L$ between two weakly symplectic vector spaces $V$ and $W$, denoted by $L\colon V \nrightarrow W $, is a split Lagrangian subspace of $\overline V\oplus W$}.
\example{Each Lagrangian subspace $L\subseteq V$ may be identified with the canonical relation $\{0\}\times L \subseteq \{0\}\oplus V$} from the zero dimensional vector space $\{0\}$  to $V$.

\example{It is easy to check that, for any weakly symplectic vector space $V$, the diagonal relation 
$$\mbox{graph }( \mbox{Id}_V)=\Delta_V\colon V\nrightarrow V$$
is isotropic and has the anti-diagonal $-\Delta_V$ as an isotropic complement. Thus, $\Delta_V$ is a split canonical relation (as is $-\Delta_V$).
}
\definition{Let $C$ be a coisotropic subspace of $V$ and $\underline C:= C/ C^{\omega}$ its symplectic reduction. The \emph{reduction relation}, denoted by $L_C\colon V\nrightarrow \underline C$, is the graph 
$$L_C:=\{(x, [x])\subset V\oplus \underline C\colon x \in C\}$$ of the of the quotient map from $C$ onto $\underline C$, considered as having source object $V$.}
\proposition{\label{reduction}$L_C$ is a canonical relation.}
\proof{The fact that $L_C$ is isotropic easily follows from
$$\underline {\omega}([x],[y])=\omega (x,y).$$
Furthermore, consider
$$L_C^{\omega}=\{(x,y)\in V\oplus \underline C \colon  \omega(x,x^{'})-\underline{\omega}(y,[x^{'}])=0, \forall (x^{'},[x^{'}])\in L_C\}.$$ Then, when $(x,y) \in L_C^{\omega}$, 
\begin{eqnarray*}
\omega(x,x^{'})-\underline{\omega}(y,[x])=\underline{\omega}([x],[x^{'}])-\underline{\omega}(y,[x^{'}])=\underline{\omega}([x]-y,[x^{'}])=0, \forall [x^{'}] \in \underline C,
\end{eqnarray*}
and since $\underline C$ is symplectic, it implies that $[x]=y$ and hence $(x,y)\in L_C$. \qed}
\remark{As we will se later, $L_C$ is not in general split Lagrangian: a sufficient condition for this is to have suitable complements of $C$ and $C^{\omega}$ in $V$ and $C$ respectively. }

\section{Composition and Reduction} The set-theoretic composition $L_2\circ L_1$   of  canonical relations $L_1 \colon \overline V\nrightarrow W$ and $L_2 \colon \overline W\nrightarrow Z$ is a linear subspace of $\overline V\oplus Z$.  
This composition is strongly transversal (i.e. the projection map from $L_C\times_V L$ onto $V\times Z$ is a closed embedding)  exactly when $L$ is transversal to $C$, where $C=V\oplus \Delta_{W}\oplus Z$.

While, in finite dimensions, this composition is always Lagrangian, and hence a canonical relation $V \nrightarrow Z$, in infinite dimensions it may be   merely isotropic.

In Section \ref{Examples}, we will give two examples where the composition of canonical relations is not Lagrangian.  In the first, the composition is closed; in the second, the weak symplectic structures are symplectic.

\subsection{Equivalence between reduction and composition; split coisotropic subspaces}\label{Composition}
There is a natural correspondence (see e.g. \cite{Weinstein}) between the reduction and composition of canonical relations, that can be spelled out as follows.
For a coisotropic subspace $C$ of a symplectic vector space $V$, and a Lagrangian subspace $L$ of $V$, the reduced Lagrangian subspace $\underline L \subseteq \underline C$, defined as:
$$\underline L:= (L\cap C)/ (L\cap C^{\omega}),$$
can be obtained as the following composition of canonical relations:
$$\underline L= L_C \circ L.$$

On the other hand, the composition of linear canonical relations is itself a particular  case of reduction.  It can be easily checked that the space  
$V\times \Delta_W  \times Z$  is a coisotropic subspace of $\overline V \times  W\times \overline W \times Z$;
and that 
$$(\overline V\times \Delta_W \times Z)^{\omega}= \{0_{\overline V} \} \times \Delta_W \times \{0_Z \},$$
therefore the reduced space
$\underline{\overline V\times \Delta_W\times Z}$ is naturally isomorphic to $V \times \overline Z$. 
Under this natural isomorphism, the composed relation $g\circ f$, where $f: V\nrightarrow W$ and $g: W\nrightarrow Z$ are canonical relations,  is the reduction of the product $f \times g$, and the composition is transversal if and only if the reduction is.

The following theorem gives sufficient conditions for the reduction relation $L_{C}$ to be split Lagrangian.
\theorem{\label{Reduction}Let $C$ be a coisotropic subspace of $V$. If there is a complement $C^c$ to $C$ in $V$ and a complement $C^{'}$  to $C^{\omega}$ in $C$ such that
\begin{enumerate}
\item $C^c$ is isotropic, and
\item $C^{' }$ and $C^c$ are symplectically orthogonal,
\end{enumerate}
then $L_C$ is split Lagrangian.
}
\proof{Given a choice of $C^c$ and $C^{'}$, we have the decomposition
$$V = C^c\oplus C^{\omega} \oplus C^{'}.$$  If we  identify the complement $C^{'} $ in the usual way with the quotient
$ \underline C = C/C^\omega$, we can write the reduction relation as
$$L_C=\{(x_1^c,x_2^{\omega},x_3^{'},[x_4]) \in V\oplus \underline C\colon  x_1^c=0, x_2^{\omega}\in C^{\omega}, x_3^{'}=[x_4] \in C^{'}\}.$$
Consider now the following relation:
$$L_C^{-}:= \{(x_1^c,x_2^{\omega},x_3^{'},[x_4]) \in V\oplus \underline C\colon  x_1^c\in C^{c}, x_2^{\omega}=0, x_3^{'}=-[x_4] \in C^{'}\}.$$
Observe that 
$L_C+L_C^{-}=V\oplus C$ and that $L^C\cap L_C^{-}=\{(0,0,0,0)\}$. By Proposition \ref{reduction}, $L_C$ is isotropic. Now 
$$(L_C^{-})^{\omega}=\{(y_1^{c},y_2^{\omega},y_3^{'},[y_4]) \in V\oplus \underline{\overline C}\colon \omega(y_1^{c}+y_2^{\omega}+y_3^{'}, x_1^{c}+x_3^{'})+\underline \omega[y_4],x_3^{'})=0\colon \forall (x_1^c,0,x_3^{'},-x_3^{'})\in L_C^{-}\}.$$

If $(y_1^c,y_2^{\omega},y_3^{'},[y_4])\in L_C^{-}$, then $y_2^{\omega}=0$ and $y_3^{'}=-[y_4]$, hence
$$\omega(y_1^c+y_2^{\omega}+y_3^{'}, x_1^c+x_3^{'})+\underline \omega([y_4],x_3^{'})=\omega(y_1^c,x_1^c)+\omega(y_1^c,x_3^{'})+\omega(y_3^{'},x_1^{c}).$$
The first term on the right hand side vanishes since $C^c$ is isotropic, by assumption (1). The second and third terms also vanish, by (2). Thus $L_C^{-}$ is isotropic.\\

    
     

 \qed}
\definition\label{SplitCo_Orth}{A coisotropic subspace $C$ is \emph{split coisotropic} if it satisfies the assumptions of Theorem \ref{Reduction} for complements $C^c$ and $C'$.  The triple $(C,C^c,C')$ will be called a \emph{splitting c-triple}.}

We will describe an equivalent formulation of split coisotropic subspaces, by replacing condition (2) with a suitable choice of a split Lagrangian subspace.

\subsection{The finite dimensional case}\label{Finite}
In the case in which $V$ is finite dimensional, the space of split Lagrangians is the same as the full Lagrangian Grassmannian. However, not every Lagrangian subspace in $V \oplus \underline C$ is the reduction relation $L_C$ for some $C$ which is split coisotropic. In this section, we study the finite dimensional case of splitting c-triples and how they are explicitly related with split reduction relations and splitting l-pairs. We also find an equivalent characterization of a split coisotropic subspace $C$ in terms of  split Lagrangian complements of $C^{\omega}$.


Suppose that $V$ is a  $2n$-dimensional vector space and $C$ is a  $n+k$-dimensional coisotropic subspace. This implies that
$\dim(\underline C)=\dim (C/ C^{\omega})= (n+k)-(n-k)=2k.$
 It is easy to observe that $\dim (L_C)=n+k= \dim(L_C^c)$, where $L_C^c$ is a complement to $L_c$ in $V\oplus \overline{\underline C}$.

Now, we observe that the space $\mathcal L$ of Lagrangian complements to $L_C$ is  open in the Lagrangian Grassmannian $\Lambda (n+k)$; therefore it has dimension $d_1= (n+k)(n+k+1)/2$.

We will now determine ``how many" complements in this space are of the form $L_C^{-}$, using the construction from Theorem \ref{Reduction}, i.e. to determine the dimension of the space of such special complements.
 
 To address this question,  we look first at the manifold of splitting c-pairs for $C$.
 Each such pair produces a symplectic decomposition of $V$: 
 $$V= C^{'} \oplus (C^{\omega} \oplus C^c).$$
 
The space $\mathcal C$ of choices of the complement $C^{'}$ is open in the Grassmanian $Gr_{2k}(n+k)$; therefore, it has dimension $d=2k(n-k)$. Now, given a choice of $C^{'}$, choosing a compatible Lagrangian complement $C^{c}$ to $C^{\omega}$ in $(C^{'})^{\omega}$ is equivalent to choosing an element in an open subset of the Lagrangian Grassmanian $\Lambda(n-k)$. Thus, the space of splitting 
c-triples $(C^{'}, C^c)$ (with $C^{'}$ fixed) has dimension $d_2=(n-k)(n-k+1)/2$.
 Now, consider the linear map 
\begin{eqnarray*}
\phi_{C^{'}}: \mathcal C &\to& \mathcal L\\
C^c&\mapsto& C^c\oplus \{0\} \oplus -\Delta_{C^{'}}.
\end{eqnarray*}
It is well defined, due to Theorem \ref{Reduction}, and since$-\Delta_{V}=-\Delta_{W}$ implies that $V=W$, we conclude that $\phi_{C^{'}}$ is 1-to-1.
 
Using the affine structure of both spaces (the space of Lagrangian complements to $L_C$ and the space of splitting c-triples $(C, C^{'}, C^c)$), and the injectivity of $\phi_{C^{'}}$, we conclude that the number of conditions which a Lagrangian complement to $L_C$ must satisfy in order to come from a compatible pair $(C^{'}, C^{c})$ is $d:=d_1-d_2=(2n+1)k$. 
 \remark {We can consider the extreme cases.   When $k=n$, i.e.  $C = V$, we have that $L_C=\Delta_C$, and $d$ in this particular case is $(2n+1) n = 2n(2n+1)/2$ which is the full dimension of 
  $\Lambda(2n)$. 
 This implies that there is a unique compatible pair $(C^{'}, C^{c})$, namely $(V,\{0\})$, which determines uniquely the anti-diagonal $-\Delta_C$ as the relation $L_C^{-}$ in Theorem \ref{Reduction}.
 The other extreme takes place when $k=0$, i.e. $C$ is Lagrangian, in which case $d=0$, i.e. there are no conditions for $L_C^c$. In this particular case, the splitting c-triples $(C, C^{'}, C^c)$ in $V$ are the same as the complements $C^{'}$ of $C$ in $V$, since the reduced space $\underline C$ is $C/C= 0$}.
 

We also observe that since $C$ is coisotropic,  $C^{\omega} \subseteq C$ is an isotropic subspace in $V$, hence also in $(C^{'})^{\omega}$. Therefore, Condition (2) in Definition \ref{SplitCo_Orth} is equivalent to finding an isotropic complement $C^c$ of $C^{\omega}$ in $(C^{'})^{\omega}$, which implies that that $C^{'}$ is a split Lagrangian subspace of $(C^{'})^{\omega}$. 
Notice that $(C^{'})^{\omega}$ is a symplectic subspace of $V$, with dimension $2n-2k$. 
Now, let us consider a Lagrangian complement $C^c$ to $C^{\omega}$ in $(C^{'})^{\omega}$. We prove the following proposition, which implies that $C^c$ is also a complement to $C$ in $V$, symplectically orthogonal to $C^{'}$.

\begin{proposition}
Any lagrangian complement $C^c$ to $C^\omega$ in $(C')^\omega$ is also a complement to $C$ in $V$.
\end{proposition}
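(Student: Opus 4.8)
The plan is to combine an elementary dimension count with a transversality check. First I would record the relevant dimensions. Since $C$ is coisotropic of dimension $n+k$ in $V$, its symplectic orthogonal $C^\omega$ has dimension $n-k$; since $C'$ is the chosen $2k$-dimensional complement of $C^\omega$ in $C$, the symplectic subspace $(C')^\omega$ has dimension $2n-2k$, and $C^\omega\subseteq (C')^\omega$ by Proposition \ref{properties}(1). As $C^\omega$ is isotropic inside the symplectic space $(C')^\omega$, any Lagrangian complement $C^c$ of $C^\omega$ there has dimension $n-k$. Consequently $\dim C+\dim C^c=(n+k)+(n-k)=2n=\dim V$, so it suffices to prove that $C\cap C^c=\{0\}$.

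For the transversality I would take $v\in C\cap C^c$ and show $v=0$ in two steps. Since $C^c\subseteq (C')^\omega$, the vector $v$ is symplectically orthogonal to $C'$. Since $v\in C$ and, by the very definition of the symplectic orthogonal, every element of $C^\omega$ is orthogonal to all of $C$, the vector $v$ is also orthogonal to $C^\omega$. Using the splitting $C=C^\omega\oplus C'$ these two facts combine to $\omega(v,C)=0$, i.e. $v\in C^\omega$. But $C^c$ is a complement of $C^\omega$ inside $(C')^\omega$, so $C^\omega\cap C^c=\{0\}$, whence $v=0$. Together with the dimension count this yields $V=C\oplus C^c$.

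I do not expect a serious obstacle here. The only points needing a little care are the bookkeeping of dimensions — in particular using that ``Lagrangian complement'' inside the $(2n-2k)$-dimensional symplectic space $(C')^\omega$ pins down $\dim C^c=n-k$ — and the observation that membership $v\in C$ already forces $v$ to be symplectically orthogonal to $C^\omega$, which is exactly what upgrades ``$v\perp C'$'' to ``$v\perp C$''. (Note that this second step uses only $C^c\subseteq (C')^\omega$, not that $C^c$ is Lagrangian; isotropy of $C^c$ enters solely through the dimension count.)
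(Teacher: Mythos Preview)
Your proof is correct. The transversality step is essentially the same idea as the paper's: both arguments exploit that $v\in C^c\subseteq (C')^\omega$ gives $v\perp C'$ and $v\in C$ gives $v\perp C^\omega$, hence $v\in C^\omega$ and finally $v\in C^\omega\cap C^c=\{0\}$. Your presentation is in fact cleaner than the paper's, which routes the same orthogonality through the decomposition $v=v_{C'}+v_{C^\omega}$ and argues about the components separately.

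The one genuine difference is in how the spanning $C+C^c=V$ is obtained. You use the dimension count $\dim C+\dim C^c=(n+k)+(n-k)=2n$; the paper instead verifies spanning directly by first splitting $V=C'\oplus (C')^\omega$ (using that $C'$ is a symplectic subspace) and then splitting $(C')^\omega=C^\omega\oplus C^c$, so that any $v$ decomposes as $(v_{C'}+v_{C^\omega})+v_{C^c}\in C+C^c$. Both are fine here since the proposition lives in the finite-dimensional section; your dimension argument is shorter, while the paper's decomposition has the mild advantage that it makes explicit the splitting $V=C'\oplus C^\omega\oplus C^c$ used elsewhere and does not invoke a dimension formula (though it still implicitly relies on finite dimensions, or at least on $V$ being strong symplectic, for $V=C'\oplus(C')^\omega$).
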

\begin{proof}
Let $v \in V$. Then 
\begin{eqnarray*}
v&=& v_{C^{'}}+v_{(C^{'})^{\omega}}\\
&=& v_{C^{'}}+v_{C^{\omega}}+v_{C^c}\\
&=& v_{C}+v_{C^c},
\end{eqnarray*}
where $v_{X}$ denotes the projection of $v$ onto the subspace $X$. Thus $V=C^c+C$.
Now, suppose $v \in C\cap C^c$. We have that 
\[v=v_{C^{'}}\oplus v_{C^{\omega}}\]
since $v\in C$.
Now, for every $w \in C$, $v_{C^{\omega}}$ is symplectically orthogonal to $w$. This implies that $v_{C^{'}}$ is also orthogonal to $w$, thus $v_{C^{'}}\in C^{\omega}$, which implies that $v_{C^{'}}\in C^{'}\cap C^{\omega}= \{0\}$. Therefore $V=C^c\oplus C$, as we wanted.
\end{proof}
This proposition suggests the following equivalent definition of split coisotropic subspace.
\begin{definition}\label{SplitCo_Lag} (Equivalent to Def. \ref{SplitCo_Orth}). A coisotropic subspace $C$ of a weakly symplectic vector space $V$ is split coisotropic if there is a complement $C^{'}$  to $C^{\omega}$ in $C$ and a complement $L$ to $C^{\omega}$ in $(C^{'})^{\omega}$, such that $L$ is isotropic (therefore split Lagrangian).
\end{definition}

 \subsection{Good compositions}


For canonical relations between symplectic manifolds, a clean intersection condition is needed to insure that the composition of two such relations is again a smooth manifold.  In finite dimensions, this condition is automatically satisfied for linear canonical relations.   In infinite dimensions, even in the linear case, we need to impose a condition to insure the good composition of split canonical relations.


\begin{definition}
Let $(L, L^{'})$ be a splitting l-pair of $V$, and let $(C, C^C, C^{'})$ be a splitting c-triple. We say that $(L, L^{'})$ and $C$ {\em intersect neatly} if the following equation holds:
\begin{equation}\label{neat}
C^{'}= L\cap C^{'} + L^{'} \cap C^{'}.
\end{equation}
\begin{remark} From Definition \ref{SplitCo_Lag} it follows that this definition does not depend on the choice of the complement $C^c$.
\end{remark}
\end{definition}


\begin{proposition}
The neatness condition is always satisfied in finite dimensions.  
\end{proposition}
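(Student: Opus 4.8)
The plan is to translate \eqref{neat} into a statement about the reduced symplectic space $\underline C=C/C^{\omega}$ and to settle it there by a dimension count, exploiting the fact that in finite dimensions the composition of linear canonical relations is automatically Lagrangian. To set up the bookkeeping, write $\dim V=2n$ and $\dim C=n+k$, so that $\dim C^{\omega}=n-k$ and $\dim C'=\dim\underline C=2k$; since $L$ and $L'$ are Lagrangian we also have $\dim L=\dim L'=n$.

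First I would record that the reduced subspaces $\underline L=(L\cap C)/(L\cap C^{\omega})$ and $\underline{L'}=(L'\cap C)/(L'\cap C^{\omega})$ are Lagrangian in $\underline C$: by the equivalence between reduction and composition recalled in Section~\ref{Composition} these are $L_C\circ L$ and $L_C\circ L'$, and in finite dimensions such a composition is Lagrangian, so $\dim\underline L=\dim\underline{L'}=k$. Next, using the decomposition $V=C^{c}\oplus C^{\omega}\oplus C'$ supplied by the splitting c-triple, I would identify $C'$ with $\underline C$ in the usual way (project $C=C^{\omega}\oplus C'$ onto $C'$) and check that under this identification $L\cap C'$ lands inside $\underline L$ and $L'\cap C'$ inside $\underline{L'}$; the point to verify is that these inclusions are \emph{equalities}, i.e. that every class of $\underline L$ (resp. $\underline{L'}$) has a representative actually lying in $C'$. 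Granting this, \eqref{neat} becomes the assertion $\underline L+\underline{L'}=\underline C$, and since $\dim\underline L+\dim\underline{L'}=2k=\dim\underline C$ it suffices to prove $\underline L\cap\underline{L'}=\{0\}$. For the latter I would take $[x]\in\underline L\cap\underline{L'}$ represented by $x\in L\cap C$ with $x-x'\in C^{\omega}$ for some $x'\in L'\cap C$, note that then $\omega(x,c)=\omega(x',c)$ for every $c\in C$, decompose an arbitrary $c\in C$ as $c_L+c_{L'}$ along $V=L\oplus L'$ so that $\omega(x,c)=\omega(x,c_{L'})=\omega(x',c_L)=\omega(x',c)$, and deduce from $V=L\oplus L'$ together with the injectivity of $\omega^{\sharp}$ that $x\in C^{\omega}$, i.e. $[x]=0$.

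The step I expect to be the main obstacle is precisely this interplay between the Lagrangian splitting $(L,L')$ and the reduction by $C$: establishing both that representatives of $\underline L$ and $\underline{L'}$ can be chosen inside $C'$ and that $\underline L\cap\underline{L'}=\{0\}$. This is exactly where finite-dimensionality must enter essentially — it is the linear-algebraic shadow of the fact that linear canonical relations compose cleanly in finite dimensions — whereas in infinite dimensions both the Lagrangianity of $L_C\circ L$ and the transversality of the two reduced Lagrangians can fail, which is the reason neatness has to be imposed as a hypothesis there. Once this compatibility is in hand, the rest is the dimension bookkeeping indicated above.
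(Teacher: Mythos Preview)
Your approach aims at a statement stronger than the one the paper actually proves, and that stronger statement is false. The paper's proof should be read existentially: given a Lagrangian $L$ and a splitting c-triple $(C,C^c,C')$, there \emph{exists} a Lagrangian complement $L'$ of $L$ for which \eqref{neat} holds. You are trying to show that \emph{every} splitting l-pair $(L,L')$ is neat with respect to the given c-triple, and this already fails in $\mathbb{R}^4$.

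Take $V=\mathbb{R}^4$ with $\omega=dq_1\wedge dp_1+dq_2\wedge dp_2$, the standard complementary Lagrangians $L=\{p_1=p_2=0\}$ and $L'=\{q_1=q_2=0\}$, and $C=\mathrm{span}(\partial_{q_1}+\partial_{p_2},\,\partial_{q_2},\,\partial_{p_1})$. Then $C^\omega=\mathrm{span}(\partial_{q_2}+\partial_{p_1})$; choosing $C'=\mathrm{span}(\partial_{q_1}+\partial_{p_2},\,\partial_{q_2})$ one finds $L\cap C'=\mathrm{span}(\partial_{q_2})$ but $L'\cap C'=0$, so \eqref{neat} fails. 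In the reduced picture $\underline L=\mathrm{span}\,[\partial_{q_2}]$ and $\underline{L'}=\mathrm{span}\,[\partial_{p_1}]$, and since $\partial_{q_2}+\partial_{p_1}\in C^\omega$ these are the \emph{same} line in $\underline C$. Thus $\underline L\cap\underline{L'}\neq 0$, and the inclusion $L'\cap C'\hookrightarrow\underline{L'}$ is strict --- precisely the two claims you flagged as ``the main obstacle'' both fail here.

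The specific gap in your argument for $\underline L\cap\underline{L'}=0$ is the displayed chain $\omega(x,c)=\omega(x,c_{L'})=\omega(x',c_L)=\omega(x',c)$: the outer equalities come from isotropy and the equality of the two endpoints is your hypothesis $x-x'\in C^\omega$, but nothing forces the common value to be zero, so the chain is circular. The underlying problem is that the decomposition $c=c_L+c_{L'}$ along $V=L\oplus L'$ need not respect $C$. The paper avoids this entirely: it fixes a complement $K^c$ of $K=L\cap C'$ inside $C'$ and then \emph{chooses} a Lagrangian complement $L'$ of $L$ containing $K^c$, which yields \eqref{neat} by construction; finite-dimensionality is invoked only to guarantee that such a Lagrangian complement exists.
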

\begin{proof}
If $L$ is Lagrangian, we can consider the intersection $K=L\cap C^{'}$. Let $K^c$ be a complement to $K$ in $C^{'}$. Using the same notation as in Section \ref{Finite}, $\dim K^c \leq 2k$. Thus, there is a choice of a Lagrangian complement $L^{c}$ of $L$ such that $K^c\subseteq L^{'}$. For such choice of complement, the condition in Equation \ref{neat} is satisfied.

\end{proof}

The following proposition gives one sufficient condition for neat intersection in infinite dimensions, namely that the split Lagrangian $L$ is contained in the split coisotropic space $C$.
\begin{proposition} \label{Trans} 
Let $C$ be a split coisotropic subspace of $V$. If $(L, L^C)$ is a splitting l-pair such that $L$ is contained in $C$, then $(L, L^C)$ and $C$ intersect neatly.
\end{proposition}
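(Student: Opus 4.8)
The plan is to show that when $L \subseteq C$, the neatness equation $C' = L \cap C' + L' \cap C'$ follows almost automatically from the splitting data, using the decomposition $V = C^c \oplus C^\omega \oplus C'$ supplied by the splitting c-triple. Since $L \subseteq C = C^\omega \oplus C'$, I would first observe that $L$ has zero component in $C^c$, so every $x \in L$ can be written uniquely as $x = x^\omega + x'$ with $x^\omega \in C^\omega$ and $x' \in C'$. The key point is that the map $\pi' \colon L \to C'$ given by $x \mapsto x'$ (the projection along $C^c \oplus C^\omega$) is surjective: given any $c' \in C'$, I would use that $C^\omega \subseteq C \subseteq$ and that $L$ is Lagrangian in $\overline V \oplus W$ — wait, more directly, since $L' $ is an isotropic (hence Lagrangian) complement to $L$ in $V$ itself is not the situation here, so I must be careful about which ambient space $L$ lives in.

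Let me restate the approach at the right level. Here $(L,L')$ is a splitting l-pair \emph{of $V$}, so $V = L \oplus L'$ with $L, L'$ both Lagrangian in $V$, and $C$ is split coisotropic in $V$ with $L \subseteq C$. First I would prove $C^\omega \subseteq L$: indeed, $C^\omega$ is the symplectic orthogonal of $C$ in $V$, and since $L \subseteq C$ we get $C^\omega \subseteq C \subseteq$ — no; rather, $C^\omega \subseteq C$ because $C$ is coisotropic, and $C^\omega = C^\omega$ is isotropic, so it is an isotropic subspace contained in $C$; I then claim $L' \cap C \subseteq$ is controlled too. The cleanest route: since $L \subseteq C$, take orthogonals to get $C^\omega \subseteq L^\omega = L$. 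Thus $C^\omega \subseteq L \subseteq C$, so $C = C^\omega + (L \cap C') \cdot$ — more precisely, decomposing $L$ along $C = C^\omega \oplus C'$, we have $L = C^\omega \oplus (L \cap C')$? That requires $C^\omega \subseteq L$, which we just established, plus a modular-law argument: $L = L \cap C = L \cap (C^\omega \oplus C') = C^\omega + (L \cap C')$ since $C^\omega \subseteq L$.

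Now for the complementary side: I need $C' = (L \cap C') + (L' \cap C')$. Take any $c' \in C'$. Since $V = L \oplus L'$, write $c' = \ell + \ell'$ with $\ell \in L$, $\ell' \in L'$. Decompose $\ell = a + b$ with $a \in C^\omega$, $b \in L \cap C'$ (using the previous paragraph), and decompose $\ell' \in V = C^c \oplus C^\omega \oplus C'$ as $\ell' = p + q + r$ with $p \in C^c$, $q \in C^\omega$, $r \in C'$. Then $c' = a + b + p + q + r$; comparing $C^c$-components gives $p = 0$ (the left side has none), comparing $C^\omega$-components gives $a + q = 0$, and comparing $C'$-components gives $c' = b + r$. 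Since $\ell' = q + r$ with $q = -a \in C^\omega \subseteq L$ and $\ell' \in L'$, and $L \cap L' = 0$, I want to conclude $q = 0$ — but $q \in L$ only forces $q \in L \cap L' $ if $q \in L'$, and $q = \ell' - r$ where $r \in C'$ need not be in $L'$. So the obstacle is exactly pinning down that $r \in L' \cap C'$, i.e. that the $C'$-component of $\ell'$ lands back inside $L'$. This is where I expect the real work: I would instead use that $L'$ is Lagrangian, hence $L' = (L')^\omega$, together with $C^\omega \subseteq L = L'^{\,\omega\text{-complement}}$... The honest fix is to symmetrize: also write $c'$ the other way isn't available, so instead I'd show $C^\omega \subseteq L'$ as well — but that can't hold since $L \cap L' = 0$ and $C^\omega \subseteq L$ unless $C^\omega = 0$. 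Hence the argument must go through $L'$-isotropy directly: for $r = \ell' - q \in C'$ with $q \in C^\omega$, I compute, for arbitrary $c'' \in L \cap C'$, that $\omega(r, c'') = \omega(\ell', c'') - \omega(q, c'')$; the second term vanishes as $c'' \in C$ and $q \in C^\omega$, and $\omega(\ell', c'') = 0$ because $c'' \in L$, $\ell' \in L'$ and... no, $L' $ is not orthogonal to $L$.

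Given these circularity issues, the correct plan is: \textbf{(1)} establish $C^\omega \subseteq L$ via $L \subseteq C \Rightarrow C^\omega = C^\omega \subseteq L^\omega = L$; \textbf{(2)} deduce $L = C^\omega \oplus (L \cap C')$ by the modular law; \textbf{(3)} show the projection $V \to C'$ (along $C^c \oplus C^\omega$) restricted to $L'$ is surjective onto $C'$ — this is the crux, and I would prove it by a dimension-free argument using that $C' \cong \underline C = C/C^\omega$ is symplectic and that the image of $L$ under this projection, which is $L \cap C'$'s image $ = $ the reduced Lagrangian $\underline L$, is a \emph{split} Lagrangian in $C'$ (here invoking Definition \ref{SplitCo_Lag} / the split structure), so it has an isotropic complement inside $C'$, and then matching that complement against the image of $L'$; \textbf{(4)} conclude $C' = \mathrm{pr}_{C'}(L) + \mathrm{pr}_{C'}(L') = (L\cap C') + \mathrm{pr}_{C'}(L')$ and finally upgrade $\mathrm{pr}_{C'}(L')$ to $L' \cap C'$ by noting that any $\ell' \in L'$ with $\mathrm{pr}_{C'}(\ell') =: r$ differs from $r$ by an element of $C^\omega \subseteq L$, and then using isotropy of $L'$ together with $L' = (L')^\omega$ to force that element to be $0$, whence $r = \ell' \in L'$. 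The main obstacle is step \textbf{(3)}: making the surjectivity of $\mathrm{pr}_{C'}|_{L'}$ onto $C'$ rigorous in the weak symplectic setting without invoking finite-dimensional counting, for which the split-Lagrangian hypothesis on the reduction (Proposition coming from Def. \ref{SplitCo_Lag}) together with $V = L \oplus L'$ should be exactly what is needed.
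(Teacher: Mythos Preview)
Your proposal tries to establish the neat condition $C' = (L \cap C') + (L' \cap C')$ for an \emph{arbitrary} complement $C'$ of $C^\omega$ in $C$ coming from a splitting c-triple. That is too strong and in fact fails already in finite dimensions: in $V=\mathbb{R}^4$ with the standard form, take $C=\{p_1=0\}$, $L=\mathrm{span}(e_{x_1},e_{x_2})$, $L'=\mathrm{span}(e_{p_1},e_{p_2})$, and $C'=\mathrm{span}(e_{x_2}+e_{p_2},\,e_{x_1}+e_{p_2})$. One checks that this $C'$ is part of a valid splitting c-triple, yet $L'\cap C'=0$ and $L\cap C'$ is one–dimensional, so their sum is a proper subspace of $C'$. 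This is exactly the wall you hit in steps (3)–(4): the projection of an element $\ell'\in L'$ onto $C'$ along $C^c\oplus C^\omega$ need not lie in $L'$, because $\ell'-\mathrm{pr}_{C'}(\ell')$ generally has a nonzero $C^\omega$-component lying in $L$, not in $L'$, and there is no isotropy relation between $L$ and $L'$ to kill it.

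The paper takes a different, much shorter route: it does not fix $C'$ in advance. From $L\subseteq C$ and $V=L\oplus L'$ one gets, by the modular law, the decomposition $C=L\oplus(L'\cap C)$ directly (any $c\in C$ splits as $\ell+\ell'$ with $\ell\in L\subseteq C$, hence $\ell'=c-\ell\in L'\cap C$). This already exhibits $L'\cap C$ as an isotropic complement to $L$ inside $C$; the neat intersection then holds for a $C'$ \emph{adapted} to this splitting (choose any complement $K$ to $C^\omega$ inside $L$ and set $C'=K\oplus(L'\cap C)$, so that $L\cap C'=K$ and $L'\cap C'=L'\cap C$). Your correct observations $C^\omega\subseteq L$ and $L=C^\omega\oplus(L\cap C')$ would slot naturally into this construction, but the attempt to force the conclusion for a pre-given $C'$ via surjectivity of $\mathrm{pr}_{C'}|_{L'}$ cannot succeed.
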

\begin{proof} If $L^c$ is any isotropic complement to $L$ in $V$,  $L^c\cap C$ is an isotropic complement to $L$ in $C$, due to the modular law for isotropic spaces \cite{Coisotropic}. On the other hand,  $\{0\}$ is a complement to $L\cap C$ in $L$, and these two complements are clearly symplectically orthogonal.
\end{proof}

The following theorem guarantees that the reduction of a split Lagrangian with neat intersection gives rise to a split Lagrangian reduced subspace.

\begin{theorem}\label{NeatRed}
Let $L$ be a split Lagrangian subspace of $V$, intersecting neatly $C$. Then the reduction $\underline L$ is again split Lagrangian.
\end{theorem}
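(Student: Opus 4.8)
The plan is to exhibit an explicit isotropic complement of $\underline L$ inside $\underline C$ and then invoke Proposition \ref{Isosplitting}. Fix a splitting c-triple $(C,C^c,C')$ witnessing the neat intersection, and identify $\underline C = C/C^{\omega}$ with $C'$ via the quotient map; under the decomposition $C = C^{\omega}\oplus C'$ the class $[x]$ of $x = x_{\omega}+x'$ corresponds to $x'$, and $\underline C$ is weakly symplectic since $\underline{\omega}([x],[y]) = \omega(x,y)$ forces $\underline{\omega}^{\sharp}$ to be injective (exactly as used in Proposition \ref{reduction}).

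The routine preliminaries come first. The subspace $\underline L = (L\cap C)/(L\cap C^{\omega})$ is isotropic, because $\underline{\omega}([x],[y]) = \omega(x,y) = 0$ for $x,y\in L\cap C$; and the analogously defined $\underline{L'} := (L'\cap C)/(L'\cap C^{\omega})$ is a well-defined subspace of $\underline C$, since the natural map $L'\cap C\to C/C^{\omega}$ has kernel $(L'\cap C)\cap C^{\omega} = L'\cap C^{\omega}$, and it is isotropic for the same reason. The claim is that $\underline{L'}$ is an isotropic complement of $\underline L$, which by Proposition \ref{Isosplitting} makes both of them Lagrangian.

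The heart of the argument is to show $\underline L + \underline{L'} = \underline C$. Under the identification $\underline C\cong C'$, $\underline L$ becomes the image of $L\cap C$ under the projection $C\to C'$ along $C^{\omega}$; since a vector of $L\cap C'$ is its own $C'$-component, this image contains $L\cap C'$, and likewise the image of $\underline{L'}$ contains $L'\cap C'$. Hence $\underline L + \underline{L'}$ contains $(L\cap C') + (L'\cap C')$, which is all of $C'\cong\underline C$ by the neatness equation \eqref{neat}. This is where the hypothesis is spent, and it is the only place where care is needed (keeping track of the three-term decomposition). I do not anticipate anything else being delicate; in particular I do not expect to use conditions (1)--(2) of split coisotropicity beyond the mere existence of the complement $C'$, in line with the remark that neatness does not depend on $C^c$.

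Finally, once $\underline L + \underline{L'} = \underline C$ is established with both summands isotropic, directness of the sum is automatic: if $k\in\underline L\cap\underline{L'}$ then $\underline{\omega}(k,\cdot)$ vanishes on $\underline L$ and on $\underline{L'}$, hence on $\underline C$, so $k = 0$ by injectivity of $\underline{\omega}^{\sharp}$ --- the same reasoning as in the proof of Proposition \ref{Isosplitting}. Thus $\underline C = \underline L\oplus\underline{L'}$ is a direct sum of two isotropic subspaces, and Proposition \ref{Isosplitting} yields that $\underline L$ (and $\underline{L'}$) is Lagrangian. Therefore $\underline L$ is split Lagrangian, with $(\underline L,\underline{L'})$ a splitting l-pair, as claimed.
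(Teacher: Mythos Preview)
Your proof is correct and follows essentially the same approach as the paper: both use the Lagrangian complement $L'$ together with the neatness equation $C' = (L\cap C') + (L'\cap C')$ to produce an isotropic complement of $\underline L$ in $\underline C\cong C'$. The only cosmetic difference is that the paper identifies $\underline L$ directly with $L\cap C'$ and takes $L'\cap C'$ as the complement, whereas you work with the a priori larger images $\underline L\supseteq L\cap C'$ and $\underline{L'}\supseteq L'\cap C'$ and deduce directness from isotropy and weak nondegeneracy; since $L\cap C'$ and $L'\cap C'$ are already complementary isotropics in $C'$ (their intersection lies in $L\cap L'=\{0\}$) and hence Lagrangian by Proposition~\ref{Isosplitting}, all four subspaces coincide and the two arguments converge.
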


\begin{proof} It is clear that $L_C$ is an isotropic subspace of $\underline C$, since $L\cap C$ is an isotropic subspace of $V$ and thus the induced symplectic structure $\underline \omega$ in $\underline C$ vanishes on $\underline L= L\cap C \Big/ L\cap C^{\omega}$. Now, since $C^{'}\cong \underline C$, the equation \ref{neat} implies that the closed subspace $L^C\cap C^{'}$ is a complement to $\underline L \cong L\cap C^{'}$, and by a similar argument as before this subspace is isotropic.

\end{proof}


Now, we are in a position to impose conditions insuring that the composition of split canonical relations subspaces is again a split canonical relation.\\
\begin{proposition}
\label{Comp}Let $V_1, V_2, V_3$ be three weak symplectic vector spaces. Then the subspace 
$$C:= V_1\oplus \Delta_{V_2} \oplus V_3\subset V_1\oplus \overline {V_2}\oplus V_2\oplus \overline{V_3}$$
is a split coisotropic subspace.
\end{proposition}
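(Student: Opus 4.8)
The plan is to exhibit explicit complements $C^c$ and $C'$ of $C$ in $W:=V_1\oplus\overline{V_2}\oplus V_2\oplus\overline{V_3}$ and of $C^\omega$ in $C$, respectively, and to verify the two conditions of Theorem \ref{Reduction} by direct computation. Note first that $C$ is coisotropic: as remarked in Section \ref{Composition}, $C^\omega=\{0_{V_1}\}\oplus\Delta_{V_2}\oplus\{0_{V_3}\}$ (where here $\Delta_{V_2}$ denotes the \emph{anti}-diagonal, i.e.\ $C^\omega$ sits inside the $\overline{V_2}\oplus V_2$ factor as the symplectic orthogonal of the diagonal, which one checks equals the diagonal again up to the sign convention), and $C^\omega\subseteq C$, so $C$ is coisotropic and $\underline C\cong V_1\oplus\overline{V_3}$.

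First I would choose $C^c:=\{0_{V_1}\}\oplus(-\Delta_{V_2})\oplus\{0_{V_3}\}$ as a complement to $C$ in $W$, where $-\Delta_{V_2}$ is the anti-diagonal of $\overline{V_2}\oplus V_2$. This is a closed complement because $\overline{V_2}\oplus V_2 = \Delta_{V_2}\oplus(-\Delta_{V_2})$ (Example in Section 2: $\Delta_{V_2}$ is split Lagrangian with anti-diagonal complement), and the $V_1$ and $\overline{V_3}$ summands already lie in $C$. It is isotropic: on $\{0\}\oplus(-\Delta_{V_2})\oplus\{0\}$ the ambient form restricts to the form of $\overline{V_2}\oplus V_2$ on the anti-diagonal, which vanishes since $-\Delta_{V_2}$ is Lagrangian there. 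That gives condition (1).

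Next I would choose $C':=V_1\oplus\{0_{V_2}\oplus 0_{V_2}\}\oplus V_3$ (sitting inside $C=V_1\oplus\Delta_{V_2}\oplus V_3$ via the obvious inclusion), which is clearly a complement to $C^\omega$ in $C$, as $C=C'\oplus C^\omega$ splits off the $\Delta_{V_2}$ factor. Then I would check condition (2): that $C'$ and $C^c$ are symplectically orthogonal. But $C'$ lives entirely in the $V_1\oplus\overline{V_3}$ directions while $C^c$ lives entirely in the $\overline{V_2}\oplus V_2$ directions, and the ambient symplectic form on $W$ is the direct sum of the forms on these factors, so any vector of $C'$ pairs to zero with any vector of $C^c$. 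Hence $(C,C^c,C')$ is a splitting c-triple and $C$ is split coisotropic, which is the claim.

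The only real subtlety — and the step I would be most careful about — is bookkeeping of the sign conventions: $W$ carries $-\omega_1\oplus(-\omega_2)\oplus\omega_2\oplus(-\omega_3)$ (bars denoting sign flips as in the definition of canonical relation), so one must confirm that $\Delta_{V_2}$ really is coisotropic (indeed Lagrangian, being split Lagrangian) inside $\overline{V_2}\oplus V_2$ with this convention, that its orthogonal complement $C^\omega$ is $\{0\}\oplus\Delta_{V_2}\oplus\{0\}$ as asserted in Section \ref{Composition}, and that the anti-diagonal is genuinely an isotropic complement. All of these are the content of the Example on $\Delta_V$ in Section 2 applied to $V=V_2$, so no genuinely new estimate is needed; everything else is the trivial observation that the three pairs of factors $V_1$, $(\overline{V_2}\oplus V_2)$, $\overline{V_3}$ are mutually symplectically orthogonal in $W$, so isotropy and orthogonality of subspaces supported in disjoint factors are automatic.
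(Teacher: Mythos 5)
Your proposal is correct and is essentially identical to the paper's own proof: the same complement $C^c=\{0\}\oplus(-\Delta_{V_2})\oplus\{0\}$ (the anti-diagonal), the same $C'=V_1\oplus\{0\}\oplus V_3$, and the same observation that these live in symplectically orthogonal factors of the ambient space. The only cosmetic wrinkle is your hedging about whether $C^\omega$ is the diagonal or anti-diagonal; since $\Delta_{V_2}$ is Lagrangian in $\overline{V_2}\oplus V_2$, its orthogonal is the diagonal itself, exactly as the paper asserts.
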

\proof{The symplectic orthogonal  
$C^\omega = \{0\}\oplus \Delta_{V_2} \oplus \{0\}$ of $C$ is contained in $C$, which is therefore coisotropic.  To show that it is in fact split coisotropic, we introduce the 
 complement $C^c$ of $C$ in $V_1\oplus \overline {V_2}\oplus V_2\oplus \overline{V_3}$ defined as:
$$C^c:= \{0\}\oplus -\Delta_{V_2} \oplus \{0\}.$$
It is isotropic since $-\Delta_{V_2}$ is isotropic. We also have the  complement $C^{'}$ of $C^{\omega}$ in $C$ defined as:
$$C^{'}=V_1\oplus  \{0\} \oplus V_3.$$
$C^c$ and $C^{'}$ are symplectically orthogonal since 
$$(\omega_1\oplus \overline{\omega_2}\oplus \omega_2 \oplus \overline{\omega_3})((0,v_2,-v_2,0),(v_1,0,0,v_3))=0-0+0-0=0,$$
so all the conditions for a split coisotropic subspace are satisfied.
\qed}

\begin{definition}
Two split canonical relations $L_1: V\nrightarrow W$ and $L_2:W \nrightarrow Z$ are called {\em neatly related} if the split Lagrangian subspace $L_1\oplus L_2$ and the subspace $V\oplus \Lambda_{W}\oplus Z,$ which is split coisotropic by Proposition \ref{Comp}, intersect neatly in $V_1\oplus \overline {V_2}\oplus V_2\oplus \overline{V_3}.$
\end{definition}


\begin{theorem}\label{Compo}
The composition of two neatly related split canonical relations  is a split canonical relation.
\end{theorem}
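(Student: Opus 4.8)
\textbf{Proof proposal for Theorem \ref{Compo}.}

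The plan is to translate the composition problem into a reduction problem, exactly as explained in Subsection \ref{Composition}, and then invoke Theorem \ref{NeatRed}. Recall that if $L_1 : V \nrightarrow W$ and $L_2 : W \nrightarrow Z$ are canonical relations, then their composition $L_2 \circ L_1$ is naturally identified with the reduction $\underline{L_1 \oplus L_2}$ of the split Lagrangian $L_1 \oplus L_2 \subseteq V_1 \oplus \overline{V_2} \oplus V_2 \oplus \overline{V_3}$ (with $V_1 = V$, $V_2 = W$, $V_3 = Z$) along the coisotropic subspace $C := V \oplus \Delta_W \oplus Z$, whose reduction $\underline C$ is canonically $V \oplus \overline Z$. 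By Proposition \ref{Comp}, $C$ is split coisotropic, with the splitting c-triple exhibited there. First I would note that $L_1 \oplus L_2$ is indeed split Lagrangian in $\overline{V_1 \oplus \overline{V_2} \oplus V_2 \oplus \overline{V_3}}$: if $(L_1, L_1')$ and $(L_2, L_2')$ are splitting l-pairs, then $(L_1 \oplus L_2, L_1' \oplus L_2')$ is a splitting l-pair of the direct sum, since the direct sum of isotropic complements is an isotropic complement.

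Next, the hypothesis that $L_1$ and $L_2$ are neatly related is, by definition, precisely the statement that the split Lagrangian $L_1 \oplus L_2$ and the split coisotropic $C = V \oplus \Delta_W \oplus Z$ intersect neatly in the sense of Equation \eqref{neat}. Therefore Theorem \ref{NeatRed} applies verbatim: the reduction $\underline{L_1 \oplus L_2} \subseteq \underline C$ is again split Lagrangian. Under the canonical symplectic isomorphism $\underline C \cong V \oplus \overline Z = \overline V \oplus Z$ (with the appropriate conjugation so that it is a canonical relation $V \nrightarrow Z$), this reduced subspace is carried to $L_2 \circ L_1$. Hence $L_2 \circ L_1$ is a split Lagrangian subspace of $\overline V \oplus Z$, i.e.\ a split canonical relation $V \nrightarrow Z$, which is the assertion.

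The one point that requires genuine care — and which I expect to be the main obstacle to a fully rigorous write-up — is checking that the set-theoretic composition $L_2 \circ L_1$ coincides with the image of $\underline{L_1 \oplus L_2}$ under the canonical isomorphism $\underline C \cong \overline V \oplus Z$, \emph{including} the topological statements: that $L_2 \circ L_1$ is closed and that the complement produced by Theorem \ref{NeatRed} maps to a genuine (closed) isotropic complement of it. The algebraic identification is the standard fibered-product computation, $\underline{L_1 \oplus L_2} = \big((L_1 \oplus L_2) \cap C\big)\big/\big((L_1 \oplus L_2) \cap C^\omega\big)$, and one verifies directly that an element $(v, w, w, z)$ of $(L_1 \oplus L_2) \cap C$ projects to $(v, z)$ with $(v,w) \in L_1$, $(w,z) \in L_2$, while quotienting by $(L_1 \oplus L_2) \cap C^\omega = (L_1 \oplus L_2) \cap (\{0\} \oplus \Delta_W \oplus \{0\})$ collapses exactly the ambiguity in $w$; this reproduces $L_2 \circ L_1$. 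The closedness and the closed-complement property are then inherited from the conclusion of Theorem \ref{NeatRed} via the fact that the canonical isomorphism $\underline C \cong \overline V \oplus Z$ and the quotient map $C \to \underline C$ are continuous with continuous inverses on the relevant subspaces, using the splitting $V_1 \oplus \overline{V_2} \oplus V_2 \oplus \overline{V_3} = C^c \oplus C^\omega \oplus C'$ furnished by Proposition \ref{Comp} to produce an explicit continuous section. Modulo these identifications — which are routine but must be stated — the theorem is an immediate consequence of Proposition \ref{Comp} and Theorem \ref{NeatRed}.
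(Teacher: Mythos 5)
Your proposal follows essentially the same route as the paper's own proof: rephrase the composition as the reduction of the split Lagrangian $L_1\oplus L_2$ along the split coisotropic $C=V\oplus\Delta_W\oplus Z$ (Proposition \ref{Comp}), and then apply Theorem \ref{NeatRed} using the neat-intersection hypothesis. Your write-up is in fact more careful than the paper's about the identification of the set-theoretic composition with the reduced subspace and the attendant topological points, but the underlying argument is the same.
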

\proof{ By Theorem \ref{Reduction}, $L_C$ is split Lagrangian, where 
$$C=V_1\oplus \Delta_{V_2} \oplus V_3.$$
Now, the composition $L_2\circ L_1$ is the reduction $\underline{(L_1\times L_2)\cap C}$ of $(L_1\times L_2)\cap C$ in $\underline C= V_1\oplus \overline V_3$.   On the other hand, the product 
if $L_1\colon V_1\nrightarrow V_2$ and $L_2\colon V_2\nrightarrow V_3$ are split, then 
Now, since $(L_1\times L_2)\cap C$ is isotropic, it follows that $\underline{(L_1\times L_2)\cap C}$ is isotropic as well. 
Therefore, invoking Theorem \ref{NeatRed} and the fact that we can phrase composition as a reduction (Section \ref{Composition}), we see that this implies that the space $\underline{(L_1\times L_2)\cap C}$ is split Lagrangian in $V_1\oplus V_3$, as we wanted.
\qed}

\section{Examples}\label{Examples}

\subsection{An example in which the composition is closed but not Lagrangian}
In our first example,\footnote{This example was mentioned briefly in \cite{Con}; here we give more details.} the weak symplectic spaces are Fr\'echet spaces in which we will explicitly construct two canonical relations whose composition is isotropic (and closed) but not Lagrangian. Let
$W := \mathcal C^{\infty}([0,1])$
be the Fr\'echet space of smooth functions on the closed interval $[0,1]$.

Then we have :
\proposition{The form $\omega$ on  $V :=W\oplus W$  defined by 
\begin{equation}
\omega((f\oplus g),(\tilde f\oplus \tilde g))=\int_0^1(f\tilde g-f\tilde g)dt
\end{equation}
is weak symplectic.
}
\proof{The map given by $\omega$ is clearly skew symmetric. To check that it is weakly non degenerate, suppose that there is an element $(f\oplus g)\in V$ such that  $\omega^{\sharp}(f\oplus g)=0$. Set $\tilde f=\tilde g=h$, with $h$ any function in $W$. Then we get that 
$$\int_0^1((f-g)h)dt=0, \forall h \in W,$$
and by the fundamental lemma of variational calculus we conclude that $f=g$. If we set $\tilde f=0$, then 
$$\int_0^1 (f\tilde g)dt=0, \forall \tilde g\in W,$$
and again by the fundamental lemma, we conclude that $f=g=0$, hence $\omega^{\sharp}$ is weak symplectic. \qed
}
\remark{Observe that $\omega$ is not symplectic:  the induced map $\omega^{\sharp}$ is not surjective. The linear dual $V^{*}$ is isomorphic (as a Fr\'echet space) to $\mathcal D([0,1])\oplus \mathcal D ([0,1])$, where $\mathcal D ([0,1])$ denotes the space of distributions on the unit interval.}
The image of $\omega^\sharp$ is a proper dense subspace of this dual.

Consider the following (closed) subspace of $V$ of codimension 1:
$$C:=\{(f\oplus g)\in V\colon g(0)=0\}.$$
\proposition{$C$ is a symplectic and coisotropic subspace of $V$, thus $\underline C= C$.}
\proof{This is equivalent to proving that $C^{\omega}=\{(0,0)\}$. Let $(\tilde f\oplus \tilde g)\in C^{\omega}$. If we set $g\equiv 0$, then we get that 
$$\int_0^1 f\tilde g=0, \forall f \in W,$$
and by the fundamental lemma, we can conclude that $\tilde g\equiv 0$. In a similar way we conclude that 
$$\int_0^1 \tilde fg=0, \forall g\in W \colon g(0)=0,$$
and, again, by the fundamental lemma, $\tilde f\equiv 0$. \qed\\

\begin{remark}
Note that, in this example, the space $C$ is not split coisotropic. The complement to $C^\omega$ in $C$ is all of $C$.  It is symplectically orthogonal only to zero, which is not big enough to contain a complement to $C$ in $V$.   
\end{remark}

Now let us consider the following subspace of $ V$:
\begin{equation}
L=\{ (f\oplus g)\in V\colon  \int_0^1 f dt=0, \frac{dg}{dt}\equiv 0\}.
\end{equation}
}
\proposition{ \label{Lag}L is a Lagrangian subspace of $V$.}
\proof{ $L$ is clearly isotropic.   Now look at
$$L^{\omega}= \{(\tilde f\oplus \tilde g)\in  \colon \int_0^1(f\tilde g-\tilde fg) dt=0~ {\rm whenever}~ \int_0^1 f dt=0 ~{\rm and} ~\frac{dg}{dt}\equiv 0  \}.$$

If we set $f\equiv 0$ and $g\equiv 1$, then we get $\int_0^1 \tilde f\, dt=0$. 

On the other hand, if we set $g \equiv 0$, we get 
$\int_0^1f\tilde g\, dt=0~ {\rm whenever}~ \int_0^1 f dt=0.$
Now let $F(t)$ be any function with $F(0) = 0$ and $F(1) =0,$ and set $f = \frac{dF}{dt},$
so that $\int_0^1 f dt=0 $.
Integrating by parts gives $\int_0^1 F\frac{d\tilde g}{dt} =F\tilde g\Big|_0^1-\int_0^1f\tilde g\, dt=0$
for any $F$ vanishing at $0$ and $1$.  Again by the fundamental lemma, $\frac {d\tilde g}{dt}$ must be identically zero, so we have proved that $\tilde f  \oplus\tilde g$ belongs to $L$ when it is in 
$L^\omega$; i.e. $L$ is Lagrangian. 
\qed
}\\

In fact, we have:

\proposition{$L$ is split Lagrangian}\label{split}.
\proof{Let us consider 
$$L^{'}:=\{(f\oplus g)\colon \frac{df}{dt}\equiv 0, \int_0^1g dt=0 \}.$$ 
Using an argument like that in  Proposition \ref{Lag}, we can check that $L^{'}$ is Lagrangian.  Since 
$$(f\oplus g)=(f-\int_0^1f dt\oplus \int_0^1 g dt)+(\int_0^1 fdt \oplus g-\int_0^1 g dt)\in L + L^{'}$$ and
$$L\cap L^{'}=\{ f\oplus g\colon f\equiv 2\int_0^1f dt,  g\equiv 2\int_0^1g dt\}=\{(0,0)\},$$
we can conclude that $L$ is split Lagrangian.\qed
}\\

Now we consider the composition $L_{C}\circ L$. It is easy to check that
$$L_{C}\circ L=\{f\oplus g \colon \int_0^1 f=0, g\equiv 0\}.$$
\proposition{$L_C\circ L$ is isotropic but not Lagrangian}.
\proof{This composition is clearly isotropic, but any element of $V$ of the form $f\oplus k$, where $k$ is a constant, belongs to $(L_{C}\circ L)^{\omega}.$   As long as $f$ has nonzero integral or $k$ is nozero, this element is not in $ L_{C}\circ L$ itself.    Consequently, the reduced space $(L_{C}\circ L)^{\omega} / L_{C}\circ L$ is 2-dimensional, and the composition is not maximal isotropic. 
 \qed
}

\begin{remark}In this example, the reduced Lagrangian  subspace is finite dimensional. It is an interesting question whether the failure of the composition to be Lagrangian, can be explained in terms of the finite dimensional reduction. \end{remark}

\subsection{An example in which the composition is not closed}
In this example, the composition of two Lagrangian relations between Hilbert spaces fails to be Lagrangian because a projection of a closed subspace of a Hilbert space is not necessarily closed.

Let $\mathcal H = L_2(S^1)$, the Hilbert space of $L_2$-functions of the circle, and let $\mathcal D\colon \mathcal H \to \mathcal H$ be the differentiation operator
\begin{eqnarray*}
\mathcal D\colon \mathcal H& \to& \mathcal H\\
f&\mapsto& \frac{df}{d\theta}.
\end{eqnarray*}

Observe that $\mathcal D$ is a closed operator whose domain is the dense (Sobolev) 
subspace of $\mathcal H$ consisting of functions with distributional derivative in $L_2$.
Therefore, $\mathcal G:= {\rm graph} ~\mathcal D$ is a closed subspace of $\mathcal K:= \mathcal H \oplus \mathcal H$
such that the projection ${\rm pr}_1(\mathcal G)$  to the first component $\mathcal H$ is a dense subspace but not closed. \\
Now, consider $V:= \mathcal K \oplus \mathcal K^*=\mathcal H \oplus \mathcal H \oplus \mathcal H ^* \oplus \mathcal H^*$. This space is symplectic with the canonical form
$$\omega((f_1,g_1, \xi^1\mu^1)(f_2, g_2,\xi^2,\mu^2))=\int_{S^1} \langle f_1, \xi^2 \rangle+\langle g_1, \mu^2 \rangle -\langle f_2,\xi^1\rangle -\langle g_2, \mu^1 \rangle, f_i, g_i\in \mathcal H, \xi^i, \mu^i \in \mathcal H ^*.$$
\begin{proposition} \label{Conormal}
The subspace $\mathcal L:=\mathcal G \oplus \mathcal G^0$ 
is a split Lagrangian subspace of $V$, where $\mathcal G^0$ is the annihilator  of $\mathcal G$ in the dual of $\mathcal K.$
\end{proposition}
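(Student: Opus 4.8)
Here is my plan to prove Proposition \ref{Conormal}.

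\textbf{Strategy.} The key observation is that $\mathcal L = \mathcal G \oplus \mathcal G^0$ is a \emph{conormal-type} subspace: it is the product of a closed subspace $\mathcal G \subseteq \mathcal K$ with its annihilator $\mathcal G^0 \subseteq \mathcal K^*$. For any closed subspace $A$ of a reflexive Banach space $\mathcal K$, the subspace $A \oplus A^0 \subseteq \mathcal K \oplus \mathcal K^*$ is Lagrangian for the canonical symplectic form, and it is split precisely when $A$ is split as a subspace of $\mathcal K$. So the plan is: (i) verify $\mathcal L$ is isotropic; (ii) verify it is Lagrangian by computing $\mathcal L^\omega$; (iii) exhibit an explicit isotropic complement, using that $\mathcal G = {\rm graph}\,\mathcal D$ is split in $\mathcal K$ because the domain of $\mathcal D$ (the Sobolev space) has a closed complement in $\mathcal H$ — or, more cheaply, because we can build the complement directly from a splitting at the level of $\mathcal K$ and its annihilator.

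\textbf{Steps in order.} First I would check isotropy directly: for $(a,\xi),(b,\eta) \in \mathcal G \oplus \mathcal G^0$ with $a,b\in\mathcal G$ and $\xi,\eta\in\mathcal G^0$, the form $\omega$ evaluates (up to sign conventions) to $\langle a,\eta\rangle - \langle b,\xi\rangle = 0$ since $\eta$ annihilates $a\in\mathcal G$ and $\xi$ annihilates $b\in\mathcal G$. Second, to show $\mathcal L$ is Lagrangian, I would show $\mathcal L^\omega \subseteq \mathcal L$: suppose $(c,\zeta)\in\mathcal K\oplus\mathcal K^*$ is $\omega$-orthogonal to every element of $\mathcal G\oplus\mathcal G^0$. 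Pairing against $(b,0)$ for all $b\in\mathcal G$ forces $\zeta$ to annihilate $\mathcal G$, i.e. $\zeta\in\mathcal G^0$. Pairing against $(0,\eta)$ for all $\eta\in\mathcal G^0$ forces $\langle c,\eta\rangle = 0$ for all $\eta\in\mathcal G^0$, i.e. $c\in {}^0(\mathcal G^0)$; since $\mathcal G$ is closed and $\mathcal K$ is reflexive (it is a Hilbert space), the bipolar theorem gives ${}^0(\mathcal G^0) = \mathcal G$, so $c\in\mathcal G$ and hence $(c,\zeta)\in\mathcal L$. Third, for the splitting: choose a closed complement $\mathcal G^c$ of $\mathcal G$ in $\mathcal K$ (this exists because $\mathcal G = {\rm graph}\,\mathcal D$ and the domain of $\mathcal D$ is a complemented Sobolev subspace of the Hilbert space $\mathcal H$, or simply because $\mathcal K$ is a Hilbert space once one notes $\mathcal G$ closed — though in a general Hilbert space every closed subspace is complemented, so $\mathcal G^c$ exists unconditionally here). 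Dually, $\mathcal K^* = \mathcal G^0 \oplus (\mathcal G^c)^0$, so $(\mathcal G^c)^0$ is a closed complement to $\mathcal G^0$ in $\mathcal K^*$. Then $\mathcal L^{'} := \mathcal G^c \oplus (\mathcal G^c)^0$ is a closed complement to $\mathcal L$ in $V$, and the same computation as in step one (with $\mathcal G$ replaced by $\mathcal G^c$) shows it is isotropic. By Proposition \ref{Isosplitting}, $(\mathcal L,\mathcal L^{'})$ is a splitting l-pair, so $\mathcal L$ is split Lagrangian.

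\textbf{Main obstacle.} The subtle point is the Lagrangian step — specifically the identification ${}^0(\mathcal G^0) = \mathcal G$, which is where closedness of $\mathcal G$ and reflexivity of $\mathcal K$ are genuinely used (this is exactly the phenomenon isolated in Proposition \ref{propertiessymplectic}). Since $\mathcal D$ is a closed operator, ${\rm graph}\,\mathcal D$ is indeed closed, so this goes through. A secondary point worth stating carefully is that although ${\rm pr}_1(\mathcal G)$ is dense but not closed in $\mathcal H$ — the source of trouble in the later composition example — this does \emph{not} obstruct $\mathcal G$ itself from being split in $\mathcal K$; the complement $\mathcal G^c$ lives in $\mathcal K$, not in a coordinate copy of $\mathcal H$. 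I would make sure the sign/pairing conventions in the definition of $\omega$ are matched consistently throughout (the paper writes $\omega = \int \langle f_1,\xi^2\rangle + \langle g_1,\mu^2\rangle - \langle f_2,\xi^1\rangle - \langle g_2,\mu^1\rangle$, which is the standard pairing of $\mathcal K$ with $\mathcal K^*$), but this is bookkeeping rather than a real difficulty.
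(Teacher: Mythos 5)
Your proof is correct, but it follows a genuinely different route from the one in the paper. The paper's proof is operator-specific: it identifies $\mathcal G^0$ with the graph of the adjoint $\mathcal D^*$, uses the (skew-)self-adjointness of $\mathcal D = d/d\theta$ to conclude $\mathcal G^0=\mathcal G$, and then produces the isotropic complement as $\mathcal J(\mathcal G)\oplus\mathcal J(\mathcal G)$, where $\mathcal J(f,g)=(-g,f)$ realizes the orthogonal complement of $\mathcal G$ in $\mathcal H\oplus\mathcal H$. Your argument instead treats $\mathcal G\oplus\mathcal G^0$ abstractly as the conormal space of the closed subspace $\mathcal G\subseteq\mathcal K$: isotropy is the tautological annihilator computation, maximality follows from the bipolar theorem (${}^0(\mathcal G^0)=\mathcal G$ using closedness of the graph and reflexivity of $\mathcal K$), and the splitting comes from any closed complement $\mathcal G^c$ in the Hilbert space $\mathcal K$ together with its dual decomposition $\mathcal K^*=\mathcal G^0\oplus(\mathcal G^c)^0$. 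This is precisely the ``alternative proof'' the authors sketch in the remark immediately following the proposition ($N^*W$ is split Lagrangian whenever $W$ is split in $V$), so you have in effect supplied the proof they left implicit. What your route buys is generality and robustness: it needs no self-adjointness, no concrete description of $\mathcal G^0$, and no auxiliary complex-structure-like map, and it sidesteps the paper's somewhat delicate identification of the annihilator of a graph with the graph of the adjoint (which in fact involves a sign/twist the paper glosses over). What the paper's route buys is an explicit, hands-on complement adapted to this particular differential operator. Two minor points: your Lagrangian step via the bipolar theorem is actually redundant once the isotropic complement is exhibited, since Proposition 2.7 already upgrades both summands of an isotropic splitting to Lagrangians; and your parenthetical ``split precisely when $A$ is split'' asserts a converse you neither prove nor need.
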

\proof{Note that $\mathcal G^0$ is the graph of the adjoint operator $\mathcal D^*$. Since $\mathcal D$ is self adjoint, 
 $\mathcal G = \mathcal G^0$. We also have that $\mathcal G \oplus \mathcal G$ is an isotropic subspace of $V$ (see e.g \cite{Everitt}), and that the orthogonal complement to $\mathcal G$  with respect to the inner product on $\mathcal H\oplus \mathcal H$ is $\mathcal J (\mathcal G)$, where $\mathcal J$ is the symplectic map
\begin{eqnarray*}
\mathcal J\colon \mathcal H \oplus \mathcal H&\to& \mathcal H \oplus \mathcal H\\
(f,g)&\mapsto&(-g,f).
\end{eqnarray*}
It is easy to check that 
this implies that $\mathcal J (\mathcal G)$ is isotropic and hence $\mathcal J (\mathcal G)\oplus \mathcal J (\mathcal G)$ is an isotropic complement to $\mathcal G \oplus \mathcal G ^0$.\qed
}
\begin{remark}
An alternative way to prove Proposition \ref{Conormal} is by considering $\mathcal G \oplus \mathcal G^0$ as the conormal bundle of $\mathcal G$.  We can show that if a subspace $W\subset V$ is split in $V$, then its conormal bundle $N^*W$ is split Lagrangian in $V\oplus V^*$.
\end{remark}

Now consider the space 

$$C= \mathcal H \oplus \mathcal H \oplus \{0\}\oplus \mathcal H^*\subset V.$$
A direct calculation shows that 
$$C^{\omega}=\mathcal H \oplus \{0\}\oplus \{0\}\oplus \{0\} \subset C;$$
hence $C$ is coisotropic and its reduced weak symplectic space is $\underline C=\mathcal H \oplus  \mathcal H^*$.\\

\begin{remark}Note that, in this example, the space $C$ is split coisotropic. A complement to $C$ in $V$ is given by the isotropic subspace
$$\{0\}\oplus \{0\} \oplus \mathcal H ^* \oplus \{0\}.$$  A complement to $C^\omega$ in $C$ is given by the symplectic subspace
$$\{0\} \oplus \mathcal H \oplus \{0\}\oplus \mathcal H^*.$$   These are indeed symplectically orthogonal.
\end{remark}
In order to describe the reduction $L_C(L) = \underline L$ of $L$ in $\underline C$, we compute the intersection
\begin{eqnarray*}
\mathcal L \cap C&=&\mathcal G \oplus \mathcal G^*\cap \mathcal H \oplus \mathcal H \oplus \{0\}\oplus \mathcal H^*\\
&=&\mathcal G\oplus {\{0\}\oplus \{0\}}. 
\end{eqnarray*}
It follows that the projection of $\mathcal L \cap C$ onto $\underline C$ is $(pr_1(\mathcal G),0)=(\mathfrak {Dom}(\mathcal D),0)$. Since the domain of $\mathcal D$ is a not closed subspace of $\mathcal H$, it follows that the reduction of $\mathcal L$ is isotropic but not Lagrangian.\\






\subsection{An example of a maximally isotropic subspace that is not split Lagrangian}
In \cite{KS}, a strong symplectic reflexive Banach space for which there are no split Lagrangian subspaces is constructed. The example in consideration, denoted by $Z_2$,  can be seen as a \emph{twisted sum} of two Hilbert spaces isomoprhic to $l_2$. We will show that $Z_2$ has a closed maximally isotropic subspace $M$ which is maximally isotropic and then by \cite{KS} Theorem 10 we can conclude that such space is not split Lagrangian.\\

The construction of $Z_2$ goes as follows. Given any real sequence $x=\{x_n \}_{n=1}^{\infty}$, its even subsequence $y=Ex$ is defined as $y_n=x_{2n}$, for all $n$. Now, if $x$ is a sequence in $l_2$, we define the sequence $\phi(x)$ as
\begin{eqnarray*}
\phi_{2n}(x)&=&x_n\\
\phi_{2n-1}(x)&=&x_n \log \frac{\vert\vert x \vert \vert_2}{\vert x_n \vert}, n\geq 1.
\end{eqnarray*}
$Z_2$ is then defined as the space of sequences $x$ such that
\begin{equation*}
\vert\vert  x \vert \vert:= \vert \vert Ex \vert \vert_2 + \vert \vert x- \phi(Ex)\vert \vert_2 < \infty.
\end{equation*}
In \cite{KS} is proven that $Z_2$ is indeed a reflexive Banach space, equipped with the symplectic form 
\[\Omega(x,y)= \sum_{n=1}^{\infty} (x_{2n}y_{2n-1}-x_{2n-1}y_{2n}).\]
Furthermore, it is proven that the standard unit vectors $\{e_n\}n+1^{\infty}$ in $\mathbb R^{\infty}$ form a basis of $Z_2$.
Now, the subspace $M$ is defined as the space generated by the odd unit vectors:
\[M =\langle  e_1, e_3, \cdots e_{2k+1}, \cdots  \rangle\]
and it is in fact a closed subspace of $Z_2$ isomorphic to $l_2$.
\begin{proposition}
$M$ is maximally isotropic.
\end{proposition}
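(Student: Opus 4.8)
The plan is to show directly that $M^\Omega = M$, i.e. that $M$ is not just isotropic but maximal isotropic. Isotropy is immediate: the symplectic form $\Omega(x,y) = \sum_n (x_{2n}y_{2n-1} - x_{2n-1}y_{2n})$ pairs an even coordinate with an odd one, so on two vectors supported entirely on odd indices every term vanishes. Since $\{e_n\}$ is a (Schauder) basis of $Z_2$, a generic element $x \in Z_2$ expands uniquely as $x = \sum_k x_{2k-1} e_{2k-1} + \sum_k x_{2k} e_{2k}$, and I would separate the ``odd part'' $x^{\mathrm{odd}} := \sum_k x_{2k-1}e_{2k-1}$ from the ``even part'' $x^{\mathrm{even}}$. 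The claim $M$ is coisotropic then amounts to: if $x \in M^\Omega$ then $x^{\mathrm{even}} = 0$.

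To prove this, suppose $x \in M^\Omega$, so that $\Omega(x, e_{2k-1}) = 0$ for every $k \geq 1$. Reading off the definition of $\Omega$, the only surviving term in $\Omega(x,e_{2k-1})$ is $x_{2k} \cdot 1$ (the product $x_{2k}(e_{2k-1})_{2k-1}$), with the term $x_{2k-1}(e_{2k-1})_{2k}$ vanishing since $e_{2k-1}$ has no even coordinate. Hence $x_{2k} = 0$ for all $k$, so $x$ is supported on odd indices and therefore lies in $M$. The one point that requires a word of care is that elements of $M$ are genuine vectors of $Z_2$ — in particular each $e_{2k-1}$ has finite $Z_2$-norm and pairing against it is a continuous functional — but this is guaranteed by the cited fact from \cite{KS} that the $e_n$ form a basis of $Z_2$; no convergence subtlety arises because for each fixed $k$ the evaluation $x \mapsto \Omega(x, e_{2k-1})$ only involves the single coordinate $x_{2k}$.

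Having established $M^\Omega \subseteq M$ together with $M \subseteq M^\Omega$, we conclude $M = M^\Omega$, i.e. $M$ is Lagrangian, hence in particular maximal isotropic (by the Remark identifying Lagrangian with maximal isotropic). The main — and really the only — obstacle is making sure that the coordinatewise argument is legitimate in the twisted-sum norm of $Z_2$ rather than in $\ell_2$: one must check that the coordinate functionals $x \mapsto x_{2k}$ are continuous on $Z_2$ and that testing against the basis vectors $e_{2k-1}$ suffices to detect membership in $M^\Omega$. Both follow from $\{e_n\}$ being a Schauder basis of $Z_2$ and from $\Omega$ being (separately, hence jointly) continuous, so the argument goes through without further work. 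Finally, invoking \cite[Theorem 10]{KS}, since $Z_2$ admits no split Lagrangian subspace, this closed maximally isotropic $M$ is an explicit example of a maximally isotropic subspace that fails to be split Lagrangian, as desired.
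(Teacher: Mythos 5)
Your proof is correct and follows essentially the same route as the paper: both establish isotropy from the fact that $\Omega$ only pairs even with odd coordinates, and both compute $M^{\Omega}$ by testing against the odd basis vectors $e_{2k-1}$ to force all even coordinates of an element of $M^{\Omega}$ to vanish, whence $M^{\Omega}=M$. Your added remarks on the continuity of the coordinate functionals and the basis property of $\{e_n\}$ are a slight refinement of the paper's more terse computation, but the underlying argument is identical.
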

\begin{proof}
We compute directly the symplectic orthogonal complement of $M$:
\[M^{\omega}=\{\{z\}_{n=1}^{\infty}: x_{2n}z_{2n-1}-x_{2n-1}z_{2n}=0, \forall x \in M \}.\]
Since $x_{2n}=0$ for sequences in $M$, then it follows that $x_{2n-1}z_{2n}=0$, for all $n\geq 1$. This implies that $z_{2n}=0$ and therefore
\[M^{\omega}= \langle e_1, e_3, \cdots, e_{2n-1}, \cdots \rangle= M\]
and thus $M$ is maximally isotropic.
\end{proof}
\begin{remark}
In the introduction of \cite{KS} it is stated without proof that examples of maximally isotropic non split Lagrangians arise from the conormal bundle of uncomplemented closed subspaces of a reflexive Banach space $X$. It is still unclear to us why this is the case.
\end{remark}
The following section provides an example of a topological field theory, the Poisson sigma model, in which split Lagrangian and split coisotropic spaces arise naturally, describing the evolution relations and Cauchy data respectively. For such spaces, the conditions of Theorems   \ref{Reduction} and \ref{Compo} are satisfied; therefore, they obey the rules of the theory of reduction and composition of split canonical relations. As explained in the Introduction (for more details see e.g \cite{Lagrangian}), the evolution relations obtained from the symplectic formulation of such theories are always isotropic, and given \emph{good} boundary conditions, they come equipped with closed isotropic complements. 
In Section \ref{PSM} we will explicitly construct such isotropic complements, and we will briefly discuss an application to the connection between symplectic groupoids and Poisson structures.

\section{Split Lagrangian subspaces in Lagrangian field theories} \label{PSM}
\subsection{Symplectic formulation}
In Lagrangian field theories, from the perspective of $BV-BFV$ theories with boundary \cite{Lagrangian}, the space of boundary fields $\mathcal F_{\partial M}$ is an infinite dimensional weak symplectic manifold. The symplectic form $\omega_{\partial M}$ is usually an exact 2-form whose primitive 1-form $\alpha$ can be constructed from the boundary contribution of the variational problem for the action $\mathcal S$ of the theory, that is a function on the space of bulk fields $\mathcal F_{M}$. The theory comes equipped with a surjective submersion $\pi\colon \mathcal F_M\to \mathcal F_{\partial M}$.

\subsubsection{The evolution relation}
Given $EL(M)$, the space of solutions of the Euler-Lagrange equations on $M$, the projection $L_M:=\pi(EL(M))$ of such solutions represents the analogue of the Hamiltonian flow for finite dimensional classical systems, although it does not correspond in general to a graph of a symplectomorphism on $\mathcal F_{\partial M}$. Such projections are called \emph{evolution relations}. In \cite{Lagrangian},  it is proven that 
\begin{theorem} $L_{M}$ is an isotropic subspace of $\mathcal F_{\partial M}$.
\end{theorem}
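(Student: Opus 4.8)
The plan is to derive isotropy directly from the variational origin of the symplectic form on $\mathcal F_{\partial M}$. Recall that the boundary contribution of the variational problem for $\mathcal S$ produces, by construction, a decomposition of the differential of the action,
\[
\mathrm d\mathcal S \;=\; \varepsilon \;+\; \pi^{*}\alpha,
\]
where $\varepsilon$ is the Euler--Lagrange one-form on $\mathcal F_{M}$ (so that $EL(M)$ is precisely its zero locus) and $\alpha$ is the primitive one-form of $\omega_{\partial M}$ that appears in the symplectic formulation recalled above. Write $\iota\colon EL(M)\hookrightarrow\mathcal F_{M}$ for the inclusion, so that $\pi\circ\iota\colon EL(M)\to\mathcal F_{\partial M}$ has image $L_{M}$.

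First I would restrict the displayed identity along $\iota$. Since $\varepsilon$ vanishes at every point of $EL(M)$, one has $\iota^{*}\varepsilon=0$, hence $\iota^{*}\mathrm d\mathcal S=(\pi\circ\iota)^{*}\alpha$; in particular $\mathrm d(\iota^{*}\mathcal S)=(\pi\circ\iota)^{*}\alpha$. Applying the de Rham differential to both sides, the left side is $\mathrm d\big(\mathrm d(\iota^{*}\mathcal S)\big)=0$, while the right side is $(\pi\circ\iota)^{*}\mathrm d\alpha=(\pi\circ\iota)^{*}\omega_{\partial M}$; thus $\omega_{\partial M}$ pulls back to zero along $\pi\circ\iota$. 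Finally I would translate this into isotropy of $L_{M}$: any tangent vector to $L_{M}$ at a point $\pi(\phi)$, $\phi\in EL(M)$, is of the form $\mathrm d\pi_{\phi}(v)$ with $v\in T_{\phi}EL(M)$, so for two such vectors $v_{1},v_{2}$ we get $\omega_{\partial M}\big(\mathrm d\pi(v_{1}),\mathrm d\pi(v_{2})\big)=\big((\pi\circ\iota)^{*}\omega_{\partial M}\big)(v_{1},v_{2})=0$. Hence $\omega_{\partial M}$ restricts to zero on $L_{M}$, i.e.\ $L_{M}\subseteq L_{M}^{\omega}$, which is the claim. In the linear case relevant here everything simplifies: $\mathcal S$ is quadratic, $\pi$ and $\iota$ are linear, $\omega_{\partial M}$ is a constant form admitting a linear primitive, and the computation above becomes a plain statement about bilinear forms restricted to the subspace $L_{M}=\pi(EL(M))$.

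The main obstacle is the last step: guaranteeing that every tangent vector to $L_{M}$ is the image under $\mathrm d\pi$ of a tangent vector to $EL(M)$ — equivalently, that the corestriction $\pi|_{EL(M)}\colon EL(M)\to L_{M}$ is a submersion, or, in the linear setting, that $TL_{M}=\mathrm d\pi\big(T\,EL(M)\big)$. For vector spaces this is automatic from the very definition $L_{M}=\pi(EL(M))$; in the general nonlinear situation it is part of the regularity hypotheses on $EL(M)$ and $\pi$ assumed in \cite{Lagrangian}. A secondary point to handle with care is that the $\alpha$ appearing in $\mathrm d\mathcal S=\varepsilon+\pi^{*}\alpha$ is the \emph{same} one-form whose differential defines $\omega_{\partial M}$; but this is exactly the content of the symplectic formulation recalled above, so no input beyond that formulation is required.
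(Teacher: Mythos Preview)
Your argument is correct and is exactly the standard variational argument from the BV--BFV formalism: from $\mathrm d\mathcal S=\varepsilon+\pi^*\alpha$ and $\iota^*\varepsilon=0$ one gets $(\pi\circ\iota)^*\alpha=\mathrm d(\iota^*\mathcal S)$, hence $(\pi\circ\iota)^*\omega_{\partial M}=0$, and in the linear setting this immediately yields isotropy of $L_M=\pi(EL(M))$. Note, however, that the present paper does \emph{not} give its own proof of this theorem: it is simply quoted as a result from \cite{Lagrangian} (Cattaneo--Mn\"ev--Reshetikhin), so there is no in-paper argument to compare against. What you have written is essentially the proof one finds there, and your identification of the only nontrivial step---surjectivity of $\mathrm d\pi|_{T\,EL(M)}$ onto $TL_M$, automatic in the linear case and part of the regularity assumptions in general---is accurate.
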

For reasonably \emph{good} Lagrangian field theories (Scalar field theory, abelian $BF$-theory and Yang-Mills theory for instance ), the evolution relations are Lagrangian. There are examples, such as the case of the wave equation ( see e.g. \cite{Wave}), in which the evolution relation is not always Lagrangian. In \cite{Con} it was proven that the evolution relations associated to  Poisson sigma models where the world-sheet has zero genus are Lagrangian. We will prove that in fact, such evolution relations are split Lagrangians. We will first prove the condition in the cases where the Poisson structure is trivial or constant, and use a change of coordinates argument, to show that the other cases can be reduced to the former.
\subsection{Linear Poisson sigma models}
A detailed description of the symplectic formulation of the Poisson sigma model as a Lagrangian field theory with boundary can be found in \cite{Relational, Symplectic, Con}. 
\subsection{Zero Poisson structure} We consider by simplicity the one dimensional linear case, i.e. $M=\mathbb R$, equipped with the zero Poisson structure \footnote{ The arguments in the sequel can be directly generalized to the case where $M=\mathbb R^n$.} . We will restrict to the case where the worldsheet is a disk, denoted by $D_n$,  with the boundary $S^1$ split into $2n$ closed intervals $I$ intersecting at the end points. On alternating intervals we impose boundary conditions in such way that the remaining $n$ intervals are free, on for each one of them we associate the weak symplectic space $V$ of boundary fields:
$$V=T^{*}(\mathcal C^{\infty}([0,1], \mathbb R)),$$
naturally identified with the cotangent bundle of the path space of $M$, understood as follows:
$V$ has two canonical coordinates $(X,\eta)$ (position and momentum), given by
\begin{eqnarray*}
X&\in& \mathcal C^{\infty}([0,1], \mathbb R),\\
\eta &\in& \Gamma^{\infty}(T^{*}[0,1]\oplus X^{*}(T^{*}\mathbb R))\cong \mathcal C^{\infty}([0,1], \mathbb R^*\cong \mathbb R).
\end{eqnarray*}
The boundary condition for the alternating non free intervals is given by $\eta=0$.
\begin{remark}
In this version of the PSM, we consider the space of boundary fields to be paths without constraints on the initial and final points. The case of fields with initial and final constraints, i.e. the paths start and end in coisotropic submanifolds of $M$ (see e.g. \cite{Dual}), and periodic fields (loops instead of paths), will be consider in a subsequent work.
\end{remark}
\begin{figure}[ht]
    \centering
    \def\svgwidth{200 pt}
    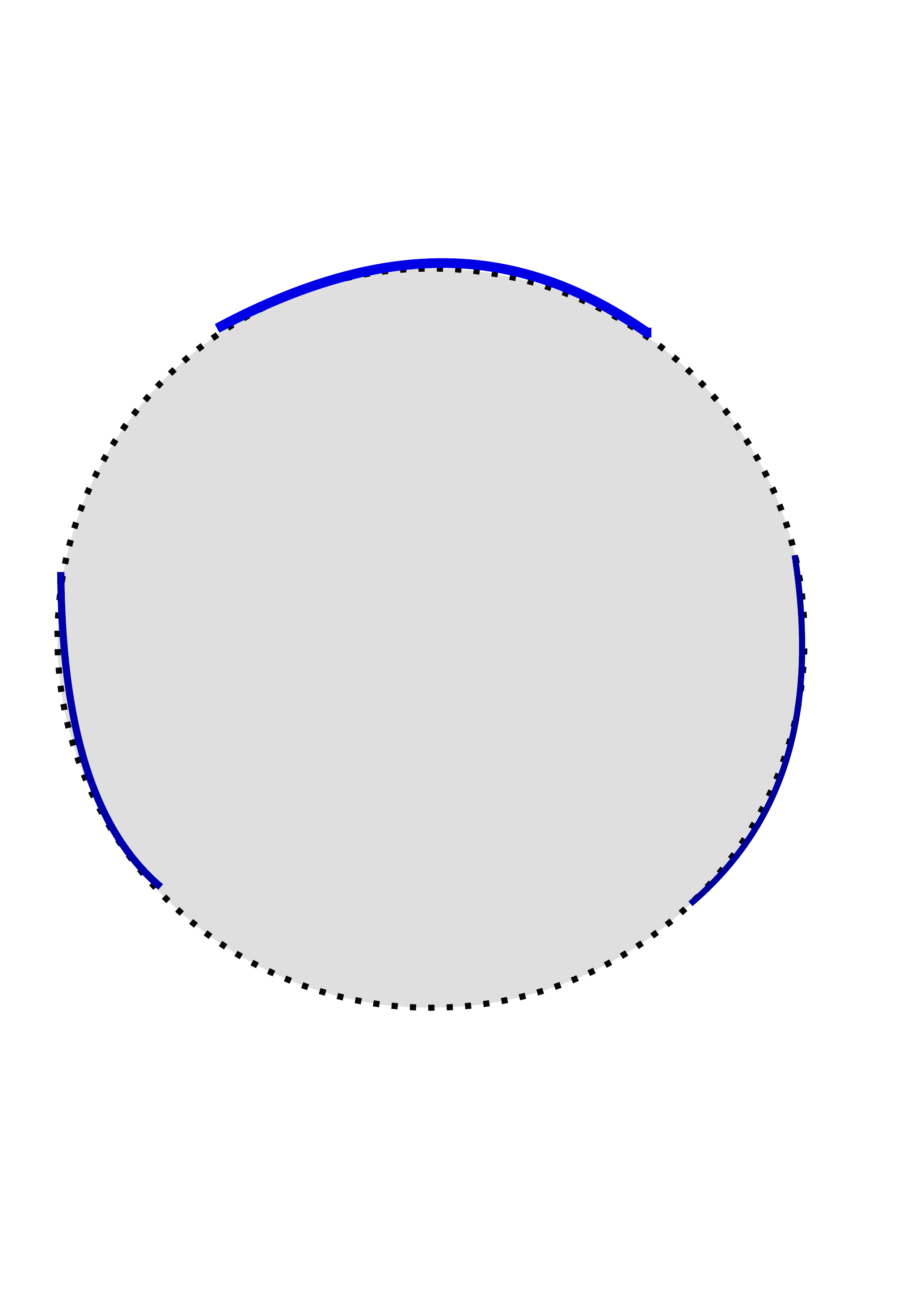
    \caption{The boundary fields for PSM with worldsheet $D_3$. The blue segments represent the free intervals for which we associate the weak symplectic space $V$. For the dashed segments we associate fields with boundary condition $\eta=0$. }
\end{figure}

We first observe that the subspace of solutions of Euler-Lagrange equations is described by
\begin{equation}\label{EL}C_M=\{(X,\eta)\colon \frac{dX}{dt}\equiv0\}\cong \mathbb R \oplus \mathcal C^{\infty}([0,1], (\mathbb R)^{*}).\end{equation}
Its symplectic complement is given by
$$C_M^{\omega}=\{(X,\eta)\colon X=0, \int_0^1\eta=0\}.$$
The symplectic reduction is 
$$\underline {C_M}= C_M/ C_M^{\omega}= \mathbb R\oplus (\mathbb R)^*\cong T^{*}(\mathbb R).$$
\begin{proposition} The coisotropic subspace $C_M$ is split.
\end{proposition}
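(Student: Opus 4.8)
The plan is to verify directly that $C_M$ satisfies the hypotheses of Theorem \ref{Reduction} by producing explicit complements. Recall that $V = T^{*}(\mathcal C^{\infty}([0,1],\mathbb R))$ carries the canonical symplectic form $\omega((X_1,\eta_1),(X_2,\eta_2)) = \int_0^1 (X_1\eta_2 - X_2\eta_1)\,dt$, and that the excerpt has already computed $C_M^{\omega} = \{(X,\eta) : X=0,\ \int_0^1\eta\,dt = 0\}\subseteq C_M$, so coisotropy is settled. It remains to find an isotropic complement $C^c$ to $C_M$ in $V$ and a complement $C'$ to $C_M^{\omega}$ in $C_M$ with $C^c$ and $C'$ symplectically orthogonal.

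The key idea is to split every smooth function on $[0,1]$ into its mean and its mean-zero part. First I would set
$$C^c := \{(X,0)\in V : \textstyle\int_0^1 X\,dt = 0\}.$$
This is a complement to $C_M$: any $(X,\eta)$ decomposes as $(X-\bar X,0) + (\bar X,\eta)$ with $\bar X := \int_0^1 X\,dt$, the first term lying in $C^c$ and the second (having constant first component) in $C_M$, while an element of $C^c\cap C_M$ has $X$ both constant and of zero mean, hence zero. Since the projection onto the mean is continuous in the Fr\'echet topology, this is a topological direct sum. Moreover $C^c\subseteq\{\eta=0\}$, so $C^c$ is isotropic, which is assumption (1) of Theorem \ref{Reduction}.

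Next I would take
$$C' := \{(X,\eta)\in V : X \text{ and }\eta\text{ both constant}\}\ \cong\ T^{*}\mathbb R,$$
a complement to $C_M^{\omega}$ in $C_M$: for $(X,\eta)\in C_M$ put $c:=\int_0^1\eta\,dt$ and write $(X,\eta)=(X,c)+(0,\eta-c)$, with $(X,c)\in C'$ and $(0,\eta-c)\in C_M^{\omega}$; the intersection is again trivial. Under this splitting $C'$ is exactly the copy of $\underline{C_M}=C_M/C_M^{\omega}$ identified in the excerpt. Finally, for $(X_1,0)\in C^c$ and $(X_2,\eta_2)\in C'$ with $X_2,\eta_2$ constant,
$$\omega((X_1,0),(X_2,\eta_2)) = \int_0^1 X_1\eta_2\,dt = \eta_2\int_0^1 X_1\,dt = 0,$$
because $X_1$ has zero mean. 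This is assumption (2), so by Theorem \ref{Reduction} the triple $(C_M,C^c,C')$ is a splitting c-triple and $C_M$ is split coisotropic.

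I do not expect a genuine obstacle here; the proof is a direct check. The only points deserving care are that the mean/mean-zero decomposition is continuous, so that $V = C^c\oplus C_M^{\omega}\oplus C'$ is a topological and not merely algebraic splitting, and the precise bookkeeping in $\omega$: it is important that the complement $C^c$ consists of \emph{mean-zero} base functions (rather than, say, functions vanishing at $0$), since it is exactly the vanishing of $\int_0^1 X_1\,dt$ that annihilates the cross-term against the constant momenta in $C'$.
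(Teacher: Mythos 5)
Your proof is correct and follows essentially the same route as the paper: the same mean-zero complement $C^c=\{(X,0):\int_0^1 X\,dt=0\}$ to $C_M$, the same complement $C'$ of constant pairs $(X,\eta)$ to $C_M^{\omega}$ in $C_M$, and the same orthogonality computation reducing to $\eta_2\int_0^1X_1\,dt=0$. Your added remark that the mean projection is continuous, so the splitting is topological and not merely algebraic, is a point the paper leaves implicit.
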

\proof{Consider the following complement to $C_M$ in $V$:
$$C_M^C:= \mathcal C_0^{\infty}([0,1], \mathbb R)\oplus \{0\},$$
where 
$$\mathcal C_0^{\infty}([0,1], \mathbb R):=\{ X \in \mathcal C^{\infty}([0,1], \mathbb R)\colon \int_0^1 X(t)dt=0\}.$$
It is a complement since the two subspaces are disjoint and any pair $(X,\eta)$ in  $V$ can be written as
$$(X,\eta)=(\int_0^1 X(t) dt,\eta)+(X-\int_0^1 X(t) dt, 0)$$
with the first component in $C_M$, the second component in $C_M^C.$
For two elements $(X,0)$ and $(X^{'},0)$ in $C_M^{C}$, we get that 
$$\omega (((X,0),(X^{'},0))=\int_0^1 (0\cdot X(t) -0\cdot X^{'}(t))\, dt=0,$$
Therefore $C^C_M$ is an isotropic complement to $C_M$.\\
Now consider the following complement to $C_M^{\omega}$ in $C_M$:
$$C_M^{'}:=\{(X,\eta)\colon \frac {dX}{dt}\equiv0,\frac{d\eta}{dt}\equiv0\}\cong \mathbb R \oplus \mathbb R^{*}.$$ 
It is a complement since the two subspaces are disjoint any pair $(X,\eta)$ in  $C_M$ can be written as
$$(X,\eta)=(0, \eta-\int_1^0 \eta \, dt)+(X, \int_0^1 \eta\, dt)$$
with
$$(0, \eta-\int_1^0 \eta \, dt)\in C_M^{\omega}, (X, \int_0^1 \eta\, dt)\in C_M^{'}.$$
Now, if  $(X,\eta) \in C_M^C$ and $(X^{'},\eta^{'})$ in $C_M^{'}$, we get that 
$$\omega (((X,\eta),(X^{'},\eta^{'}))=\int_0^1 -(0\cdot X^{'}(t) +\eta^{'} \cdot X(t))\, dt=\eta^{'} \int_0^1 X(t)\, dt=0,$$
therefore $C^C_M$ and $C_M^{'}$ are symplectically orthogonal, hence $C_M$ is a split coisotropic subspace, as we wanted.
\qed}
 \subsection{The evolution relations} 
 \subsubsection{The relation $L_M^1$}
Now, consider the disk $D_1$ and the following subspace $L^1_M$ of $C_M$:
$$L^1_M:= \{(X,\eta)\colon \frac{d X}{dt}\equiv 0, \int_0^1 \eta(t) \, dt=0\}.$$
\begin{proposition}
$L^1_M$ is a split Lagrangian subspace of $V$ and $L^1_M$ and $C_M$ intersect neatly
\end{proposition}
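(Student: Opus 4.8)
The plan is to produce an explicit isotropic complement $L^{1'}_M$ to $L^1_M$ in $V$, so that split Lagrangianity follows at once from Proposition \ref{Isosplitting}, and then to deduce neat intersection from the inclusion $L^1_M\subseteq C_M$ via Proposition \ref{Trans}. Recall that the symplectic form on $V=T^{*}(\mathcal C^{\infty}([0,1],\mathbb R))$ is
$$\omega\big((X,\eta),(X',\eta')\big)=\int_0^1\big(\eta\,X'-\eta'\,X\big)\,dt,$$
and that we already have at our disposal the splitting c-triple $(C_M,C_M^C,C_M')$ for $C_M$ from the preceding proposition, with $C_M'\cong\mathbb R\oplus\mathbb R^{*}$ the space of pairs of constant functions.

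First I would set
$$L^{1'}_M:=\Big\{(X,\eta)\colon \int_0^1 X(t)\,dt=0,\ \tfrac{d\eta}{dt}\equiv 0\Big\},$$
the ``mirror image'' of $L^1_M$. Writing $\bar X:=\int_0^1 X\,dt$ and $\bar\eta:=\int_0^1\eta\,dt$, the decomposition
$$(X,\eta)=\big(\bar X,\ \eta-\bar\eta\big)+\big(X-\bar X,\ \bar\eta\big)$$
puts the first summand in $L^1_M$ and the second in $L^{1'}_M$, while $(X,\eta)\in L^1_M\cap L^{1'}_M$ forces $X$ constant of zero mean and $\eta$ constant of zero mean, i.e.\ $(X,\eta)=(0,0)$; hence $V=L^1_M\oplus L^{1'}_M$. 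A one-line computation shows both summands are isotropic: for $(c,\eta),(c',\eta')\in L^1_M$ with $c,c'$ constant and $\int\eta=\int\eta'=0$ one gets $\omega((c,\eta),(c',\eta'))=c'\int_0^1\eta-c\int_0^1\eta'=0$, and symmetrically for $L^{1'}_M$. By Proposition \ref{Isosplitting}, $L^1_M$ and $L^{1'}_M$ are each (closed and) Lagrangian, so $(L^1_M,L^{1'}_M)$ is a splitting l-pair and $L^1_M$ is split Lagrangian.

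For the neat intersection, the key observation is that $L^1_M\subseteq C_M$: membership in $L^1_M$ already requires $\tfrac{dX}{dt}\equiv 0$, which is precisely the defining condition of $C_M$. Since $C_M$ is split coisotropic (preceding proposition) and $(L^1_M,L^{1'}_M)$ is a splitting l-pair with $L^1_M\subseteq C_M$, Proposition \ref{Trans} applies and gives that $(L^1_M,L^{1'}_M)$ and $C_M$ intersect neatly. One can also check this by hand: intersecting with $C_M'\cong\mathbb R\oplus\mathbb R^{*}$ gives $L^1_M\cap C_M'=\mathbb R\oplus\{0\}$ (an arbitrary constant $X$, while the only constant $\eta$ of zero mean is $0$) and $L^{1'}_M\cap C_M'=\{0\}\oplus\mathbb R^{*}$, whose sum is all of $C_M'$, which is Equation \ref{neat}.

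There is no deep obstacle here: the only points requiring care are verifying that the two summands of the displayed decomposition really land in $L^1_M$ and $L^{1'}_M$ respectively, and recognizing the inclusion $L^1_M\subseteq C_M$, which lets us invoke the general Proposition \ref{Trans} instead of re-running a fundamental-lemma-of-the-calculus-of-variations argument to prove maximality of the isotropic $L^1_M$ by direct computation of $\big(L^1_M\big)^{\omega}$.
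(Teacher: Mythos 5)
Your proof is correct and follows essentially the same route as the paper: the same complement $\{(X,\eta)\colon \int_0^1 X\,dt=0,\ \tfrac{d\eta}{dt}\equiv 0\}$, the same isotropy computations, and the same appeal to Proposition \ref{Trans} via the inclusion $L^1_M\subseteq C_M$ for neatness. If anything you are slightly more complete, since you explicitly verify the direct-sum decomposition $V=L^1_M\oplus L^{1'}_M$, which the paper's proof leaves implicit.
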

\proof{ To check that $L_M^1$ is isotropic, if $(X,\eta)$ and $(X^{'}, \eta^{'})$ are in $L_M^{1}$, then
$$\omega((X,\eta),(X^{'},\eta^{'}))=X\int_0^1\eta^{'}(t)\, dt -X^{'}\int_0^1\eta(t)\, dt=0-0=0.$$

Consider the following complement to $L^1_M$ in $V$:
$$(L^1_M)^C:=\{(X,\eta)\colon  \frac{d \eta}{dt}\equiv 0, \int_0^1 X(t)\, dt=0\}.$$
It is isotropic since, if $(X,\eta)$ and $(X^{'}, \eta^{'})$ are in $(L_M^{1})^C$, then
$$\omega((X,\eta),(X^{'},\eta^{'}))=\eta\int_0^1X^{'}(t)\, dt -\eta^{'}\int_0^1 X(t)\, dt=0-0=0.$$
The neat intersection condition follows from Proposition \ref{Trans}.
\qed}

In this case we obtain that the reduced space $\underline{L^1_M}$ is the zero section $\mathbb R$ of $\mathbb R^{*}\oplus \mathbb R$, that is clearly split Lagrangian (it is a Lagrangian subspace of a finite dimensional symplectic vector space).
 \subsubsection{The relation $L_M^2$}

In a similar way as before, we consider the disk $D_2$ for which we can define the following relation $L^2_M\colon V \nrightarrow V$:
$$L^2_M:=\{(X,\eta, X^{'}, \eta^{'})\subset V\oplus V\colon X=X^{'}=\mbox{ct}, \int_0^1 (\eta-\eta^{'})(t)\, dt =0\},$$
where ct is a constant.
\begin{proposition}
$L_M^2$ is a split Lagrangian subspace of $V\oplus \overline V$, where $L_M^2$ and $C_M\oplus C_M$ intersect neatly.
\end{proposition}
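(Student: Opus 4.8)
The plan is to mimic the treatment of $L_M^1$ from the previous section, but now for a relation between two copies of $V$, and then invoke the general machinery (Propositions \ref{Isosplitting}, \ref{Trans}, Theorem \ref{NeatRed}) to conclude. First I would check directly that $L_M^2$ is isotropic: for two elements $(X,\eta,X',\eta')$ and $(\tilde X,\tilde\eta,\tilde X',\tilde\eta')$ of $L_M^2$, the symplectic form on $V\oplus\overline V$ is the difference $\omega((X,\eta),(\tilde X,\tilde\eta)) - \omega((X',\eta'),(\tilde X',\tilde\eta'))$; since $X=X'$ and $\tilde X=\tilde X'$ are constants, this equals $X\int_0^1(\tilde\eta-\tilde\eta') - \tilde X\int_0^1(\eta-\eta')$, which vanishes because both integrals are zero by definition of $L_M^2$. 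This is a one-line computation analogous to the one for $L_M^1$.

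Next I would exhibit an isotropic complement. The natural candidate, by analogy with $(L_M^1)^C$, is
$$(L_M^2)^C := \{(X,\eta,X',\eta') \in V\oplus\overline V \colon \tfrac{d\eta}{dt}\equiv 0,\ \tfrac{d\eta'}{dt}\equiv 0,\ \eta=\eta',\ \textstyle\int_0^1(X-X')(t)\,dt = 0\},$$
i.e. the "momentum-constant, position-balanced" subspace, possibly with a sign twist $X'=-X$ on the non-constant part as in the average construction; I would fix the precise form so that $L_M^2 \oplus (L_M^2)^C = V\oplus V$ and $L_M^2 \cap (L_M^2)^C = \{0\}$. The complement check is the routine decomposition: write an arbitrary $(X,\eta,X',\eta')$ as the sum of its "constant-position/zero-average-momentum-difference" part (landing in $L_M^2$) and the remainder (landing in the complement), exactly as was done for $C_M$ and $L_M^1$. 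Isotropy of $(L_M^2)^C$ is again a short integral computation: with $\eta,\eta'$ constants one gets $\omega$-pairings of the form $\eta\int X - \tilde\eta\int X$ which cancel by the average/balance conditions. Then Proposition \ref{Isosplitting} upgrades "$L_M^2$ isotropic with isotropic complement" to "$L_M^2$ split Lagrangian."

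For the neat intersection claim, I would first observe that $L_M^2$ is contained in $C_M\oplus C_M$ — indeed the defining condition $X=X'=\text{const}$ forces $\frac{dX}{dt}\equiv 0$ and $\frac{dX'}{dt}\equiv 0$, which is exactly the condition \eqref{EL} (componentwise) cutting out $C_M$ — and that $C_M\oplus C_M$ is split coisotropic in $V\oplus V$ (it is a direct sum of the split coisotropic $C_M$'s, with complement $C_M^C\oplus C_M^C$ and $(C_M\oplus C_M)^\omega = C_M^\omega\oplus C_M^\omega$, the required orthogonality being inherited). Then Proposition \ref{Trans} applies verbatim: a split Lagrangian contained in a split coisotropic subspace intersects it neatly. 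The main obstacle, and the only place requiring genuine care rather than bookkeeping, is pinning down the exact complement $(L_M^2)^C$ — in particular whether one needs the sign-twisted "anti-diagonal" version on the non-constant modes to simultaneously achieve the direct-sum decomposition of $V\oplus V$ and isotropy; once that is chosen correctly, every remaining step is a direct transcription of the $L_M^1$ case together with the cited general results, and the reduced space $\underline{L_M^2}$ is again a Lagrangian in the finite-dimensional $T^*\mathbb R\oplus\overline{T^*\mathbb R}$, hence trivially split Lagrangian by Theorem \ref{NeatRed}.
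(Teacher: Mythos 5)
Your outline follows the paper's proof: a direct check that $L_M^2$ is isotropic (which you carry out correctly), the exhibition of an explicit isotropic complement, Proposition \ref{Isosplitting} to upgrade to split Lagrangian, and Proposition \ref{Trans} together with $L_M^2\subseteq C_M\oplus C_M$ for the neat intersection (the last point you actually spell out more explicitly than the paper does). The problem is the one step you yourself flag as ``the only place requiring genuine care'': the candidate complement you actually write down does not work, and the hedge ``possibly with a sign twist'' leaves the essential point unresolved. With your conditions $\eta=\eta'$ constant and $\int_0^1(X-X')\,dt=0$, the subspace of elements $(a,b,a,b)$ with $a,b$ constant lies in \emph{both} $L_M^2$ (since $X=X'=a$ is constant and $\int_0^1(\eta-\eta')\,dt=0$) and your $(L_M^2)^C$, so the intersection is nontrivial and you do not have a complement. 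Your candidate happens to be isotropic, which may be why the error is easy to miss, but isotropy is not the issue; transversality is.

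The fix requires the anti-diagonal twist on \emph{both} factors relative to your guess: the paper takes
$$(L^2_M)^C=\Bigl\{(X,\eta,X',\eta')\colon \eta=-\eta'=\mbox{const},\ \int_0^1(X+X')(t)\,dt=0\Bigr\},$$
i.e.\ the momentum components are opposite constants and it is the \emph{sum} (not the difference) of the position integrals that vanishes. Then the intersection with $L_M^2$ forces $2a=\int_0^1(X+X')\,dt=0$ and $2c=\int_0^1(\eta-\eta')\,dt=0$, hence is trivial, and the explicit decomposition of an arbitrary $(X,\eta,X',\eta')$ into the two summands goes through as in the $L_M^1$ and $C_M$ cases. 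So you identified exactly where the difficulty sits, but the proposal as written stops short of resolving it, and the default (untwisted) choice it offers is wrong; the remainder of your argument is sound once the correct complement is in place.
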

\proof{ 
To check that $L_M^2$ is isotropic, if $(X_1,\eta_1,X_1,\eta_1^{'})$ and $(X_2,\eta_2,X_2,\eta_2^{'})$ are in $L_M^{2}$, then
\begin{eqnarray*}
&&\omega\oplus \overline{\omega}\,((X_1,\eta_1, X_1, \eta_1^{'}),(X_2,\eta_2, X_2, \eta_2^{'}))\\
&=&X_1\int_0^1 \eta_2(t)\, dt-X_2\int_0^1\eta_1(t)\, dt -X_1\int_0^1 \eta_2^{'}(t)\, dt+X_2\int_0^1\eta_1^{'}(t)\, dt\\
&=&X_1\int_0^1(\eta_2-\eta_2^{'})(t)-X_2\int_0^1(\eta_1-\eta_1^{'})(t)\, dt\\
&=&0-0=0.
\end{eqnarray*}

Now we consider the following subspace of $V \oplus \overline V$:

$$(L^2_M)^C:=\{(X,\eta, X^{'}, \eta^{'})\subset V\oplus V\colon \int_0^1(X+X^{'})(t)\, dt= 0, \eta=-\eta^{'}, \eta=\mbox{ ct}\}.$$
To check that  $(L^2_M)^C$ is a complement to $L^2_M$, we realize that any element of the space $V \oplus \overline V$ can be decomposed as
$$(X,\eta,X^{'}, \eta^{'})= $$
$$(\frac{\int_0^1 (X+X^{'})(t)\, dt}{2}, \eta+ \frac{\int_0^1 (\eta-\eta^{'})(t)\, dt}{2}, \frac{\int_0^1 (X+X^{'})(t)\, dt}{2},  \eta^{'}+ \frac{\int_0^1 (\eta^{'}-\eta)(t)\, dt}{2}  ) +$$
$$(X-\frac{\int_0^1 (X+X^{'})(t)\, dt}{2}, \frac{\int_0^1 (\eta^{'}-\eta)(t)\, dt}{2}, X^{'}-\frac{\int_0^1 (X+X^{'})(t)\, dt}{2}, \frac{\int_0^1 (\eta-\eta^{'})(t)\, dt}{2}).$$
The first summand belongs to $L^2_M$ and the second one belongs to $(L^2_M)^C$ and it is a direct check that the summands are disjoint.
To check that $(L_M^2)^C$ is isotropic, we check that, for two elements $(X_1,\eta_1, X^{'}_1, -\eta_1^{'})$ and $(X_2,\eta_2, X^{'}_2, -\eta_2^{'})$ 
\begin{eqnarray*}
&&\omega\oplus \overline{\omega}\,((X_1,\eta_1, X^{'}_1, -\eta_1),(X_2,\eta_2, X^{'}_2, -\eta_2))\\
&=&\eta_2\int_0^1 X_1(t)\, dt-\eta_1\int_0^1X_2(t)\, dt+\eta_2\int_0^1 X_1^{'}(t)\, dt-\eta_1\int_0^2X_2^{'}(t)\, dt\\
&=&\eta_2\int_0^1(X_1+X^{'}_1)(t)-\eta_1\int_0^1(X_2+X^{'}_2)(t)\, dt\\
&=&0-0=0.
\end{eqnarray*}
\qed}
\newline
We can verify that the reduced relation $\underline{L_2}$ is the diagonal relation $\Delta_{\mathbb R^{*} \oplus \mathbb R}\colon \mathbb R^{*} \oplus \mathbb R\nrightarrow \mathbb R^{*} \oplus \mathbb R$ that is split Lagrangian.

 \subsubsection{The relation $L_M^3$}
 For $L_M^3$ we consider the concatenation (composition) of paths the $X$- coordinate, namely, if $ X(1)=X^{'}(0)$
 
 $$
X*X^{'}(t)= \left\{ \begin{array}{rl}
 X(2t) &\mbox{ if $0\leq t \leq \frac 1 2 $} \\
  X^{'}(2t-1) &\mbox{ if $\frac 1 2 \leq t \leq 1 $}
       \end{array} \right.
       $$
Note that in general, $*$ is not a well defined operation on $V$, since the concatenation of two smooth paths is not in general smooth and since we have not defined the operation for the $\eta$-coordinate.
However, as proven in \cite{Symplectic}, given two composable paths $X$ and $X^{'}$ such that $(X,\eta)$ and $(X^{'}, \eta^{'})$ in $C_M$, there are pairs $(\tilde X, \tilde{\eta})$ and $(\tilde X ^{'}, \tilde{\eta}^{'})$ in $C_M$ such that

\begin{enumerate}
\item $((X,\eta),(\tilde X, \tilde{\eta}))$ and $(X^{'}, \eta^{'}),(\tilde X ^{'}, \tilde{\eta}^{'})$ are elements of $L_M^2$.
\item The paths $\tilde{\eta}$ and $\tilde{\eta^{'}}$ are composable, furthermore, $\tilde{\eta(0)}=\tilde{\eta}(1)=\tilde{\eta}^{'}(0)=\tilde{\eta}^{'}(1)=0$.
\item $(\tilde X, \tilde{\eta})*(\tilde X ^{'}, \tilde{\eta}^{'}):=(\tilde X*\tilde X^{'}, \tilde{\eta}*\tilde{\eta}^{'}) \in C_M$.
\end{enumerate}

Therefore, for the next evolution relation, we can assume without loss of generality that the path concatenation is smooth. We consider the following relation
$$L_M^3:=\{(X,\eta,X^{'}, \eta^{'}, X^{''}, \eta^{''})\in C_M\oplus C_M \oplus C_M\colon ((\tilde X*\tilde X^{'}, \tilde{\eta}*\tilde{\eta}^{'}), (X^{''}, \eta^{''}))\in L^2_M\},$$
that can be described by the following constraints:
\begin{eqnarray*}
&&X=X^{'}=X^{''}=\mbox{ ct },\\
&&\int_0^1\eta^{''}(t)\, dt= \int_0^1(\eta +\eta^{'})(t)\,dt.
\end{eqnarray*}
\begin{proposition}
$L^3_M$ is a split Langrangian subspace of $V\oplus V \oplus \overline V$ intersecting neatly$C_M\oplus C_M \oplus C_M$.
\end{proposition}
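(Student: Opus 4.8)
The plan is to follow the template already used for $L^1_M$ and $L^2_M$: exhibit an explicit isotropic complement of $L^3_M$ inside $V\oplus V\oplus\overline V$, verify the direct-sum decomposition by an averaging formula, and then read off the neat intersection from Proposition \ref{Trans}, using that $L^3_M\subseteq C_M\oplus C_M\oplus C_M$ holds by construction and that a finite direct sum of split coisotropic subspaces is again split coisotropic (one takes the complements $C_M^C$ and $C_M'$ of the preceding proposition slotwise). Throughout I work with the constraint description of $L^3_M$, which the paper has already shown is equivalent to the definition via $\tilde X,\tilde\eta$.

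First I would check that $L^3_M$ is isotropic directly from the constraints $X=X'=X''=\mathrm{ct}$ and $\int_0^1(\eta+\eta'-\eta'')\,dt=0$. Evaluating $\omega\oplus\omega\oplus\overline\omega$ on two such sextuples and using that the $X$-slots are constant, the three contributions combine into $X_1\int_0^1(\eta_2+\eta_2'-\eta_2'')\,dt-X_2\int_0^1(\eta_1+\eta_1'-\eta_1'')\,dt$, which vanishes by the defining constraint; this is the $L^2_M$ computation with one extra factor.

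Next I would take as candidate complement
\[
(L^3_M)^C:=\bigl\{(X,\eta,X',\eta',X'',\eta'')\in V\oplus V\oplus\overline V\colon {\textstyle\int_0^1}(X+X'+X'')(t)\,dt=0,\ \eta=\eta'=-\eta''=\mathrm{ct}\bigr\}.
\]
Isotropy of $(L^3_M)^C$ is again one line: with the $\eta$-slots constant and the $X$-slots of total integral zero, $\omega\oplus\omega\oplus\overline\omega$ collapses to $\eta_2\int_0^1(X_1+X_1'+X_1'')\,dt-\eta_1\int_0^1(X_2+X_2'+X_2'')\,dt=0$. The one real step is that $L^3_M$ and $(L^3_M)^C$ are complementary. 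For disjointness, an element of the intersection has $X=X'=X''$ constant with $3X=\int_0^1(X+X'+X'')\,dt=0$, hence $X\equiv0$, and $\eta=\eta'=-\eta''$ constant with $3\eta=\int_0^1(\eta+\eta'-\eta'')\,dt=0$, hence $\eta\equiv0$. For spanning, given an arbitrary $(X,\eta,X',\eta',X'',\eta'')$ I would set $c:=\tfrac13\int_0^1(X+X'+X'')\,dt$ and $b:=\int_0^1(\eta+\eta'-\eta'')\,dt$ and write it as the sum of the two elements
\begin{gather*}
(c,\ \eta-\tfrac b3,\ c,\ \eta'-\tfrac b3,\ c,\ \eta''+\tfrac b3)\in L^3_M, \\
(X-c,\ \tfrac b3,\ X'-c,\ \tfrac b3,\ X''-c,\ -\tfrac b3)\in (L^3_M)^C,
\end{gather*}
where the first summand satisfies $\int_0^1(\eta+\eta'-\eta'')\,dt=b-b=0$ and the second satisfies $\int_0^1(X+X'+X'')\,dt=3c-3c=0$. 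By Proposition \ref{Isosplitting} both spaces are then Lagrangian, so $(L^3_M,(L^3_M)^C)$ is a splitting l-pair.

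Finally, neat intersection with $C_M\oplus C_M\oplus C_M$ follows at once from Proposition \ref{Trans}, since $L^3_M$ is contained in $C_M\oplus C_M\oplus C_M$ and the latter is split coisotropic. I expect the only delicate point throughout to be the sign bookkeeping coming from the $\overline V$ factor — the minus signs on the $\eta''$-slots and the consistency of the weights $c$ and $\tfrac b3$ across the three factors — and that this is the sole place where a careless version of the argument could go wrong; apart from that everything is a transcription of the $L^1_M$ and $L^2_M$ proofs. Alternatively one could present $L^3_M$ as the composition of a path-concatenation relation with $L^2_M$ and invoke Theorem \ref{Compo}, but checking that the concatenation is itself a split canonical relation neatly related to $L^2_M$ costs essentially the same effort.
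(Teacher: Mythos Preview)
Your proposal is correct and follows essentially the same approach as the paper: the same candidate complement $(L^3_M)^C$, the same averaging decomposition with the thirds $c$ and $b/3$, and the same appeal to Proposition~\ref{Trans} for neat intersection. If anything your write-up is slightly tidier: you use the correct combination $b=\int_0^1(\eta+\eta'-\eta'')\,dt$ (the paper's $\sigma_3(\eta)$ is written with a $+\eta''$, which does not place the first summand in $L^3_M$), and you spell out the disjointness check, which the paper omits.
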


\proof{ To check that $L^3_M$ is isotropic, if we consider to elements $(X_1,\eta_1,X_1, \eta_1^{'}, X_1, \eta_1^{''})$ and $(X_2,\eta_2,X_2, \eta_2^{'}, X_2, \eta_2^{''})$ in $L^3_M$, then 
\begin{eqnarray*}
&&\omega\oplus \omega\oplus \overline{\omega}\,((X_1,\eta_1,X_1, \eta_1^{'}, X_1, \eta_1^{''}),(X_2,\eta_2,X_2, \eta_2^{'}, X_2, \eta_2^{''}))\\
&=&X_1\int_0^1 \eta_2(t)\, dt-X_2\int_0^1\eta_1(t)\, dt +X_1\int_0^1 \eta_2^{'}(t)\, dt-X_2\int_0^1\eta_1^{'}(t)\, dt\\
&&- X_1\int_0^1\eta_2^{''}(t)\, dt+X_2\int_0^1 \eta_1^{''}(t)\, dt\\
&=&X_1\int_0^1(\eta_2+\eta^{'}_2-\eta_2^{''})(t)-X_2\int_0^1(\eta_1+\eta_1^{'}-\eta_1^{''})(t)\, dt\\
&=&0-0=0.
\end{eqnarray*}

Now, let us consider the following complement $(L^3_M)^C$ of $L^3_M$ in $V\oplus V \oplus \overline V$:

 $$
(L^3_M)^C:=\left\{ \begin{array}{lr}
(X,\eta,X^{'}, \eta^{'}, X^{''}, \eta^{''})\in V\oplus V \oplus \overline V\colon &\\
  \eta=\eta^{'}=-\eta^{''}=\mbox{ ct },&\\
  \int_0^1(X +X^{'}+X^{''})(t)\,dt=0.
       \end{array} \right\}.
$$
To check that it is a complement, we consider the following decomposition of any element $(X,\eta,X^{'}, \eta^{'}, X^{''}, \eta^{''})$ in  $V\oplus V \oplus V$, where $\sigma_3(X):= \frac 1 3 \int_0^1 (X+X^{'}+X^{"})(t)\, dt$ and $\sigma_3(\eta):=\frac 1 3 \int_0^1 (\eta+\eta^{'}+\eta^{"})(t)\, dt$:
\begin{eqnarray*}
&&(X,\eta,X^{'}, \eta^{'}, X^{''}, \eta^{''})=\\
&&(\sigma_3(X),\eta-\sigma_3(\eta),\sigma_3(X), \eta^{'}-\sigma_3(\eta), \sigma_3(X), \eta^{''}+\sigma_3(\eta))\\
&+& (X-\sigma_3(X),\sigma_3(\eta),X^{'}-\sigma_3(X),\sigma_3(\eta), X^{''}-\sigma_3(X), -\sigma_3(\eta)).
\end{eqnarray*}
It can be checked that the summands are disjoint.

To check that $(L^3_M)^C$ is isotropic, if we consider to elements $(X_1,\eta_1,X_1^{'}, \eta_1, X_1^{''}, -\eta_1)$ and $(X_2,\eta_2,X_2^{'}, \eta_2, X_2^{''},- \eta_2)$ in $L^3_M$, then 
\begin{eqnarray*}
&&\omega\oplus \omega\oplus \overline{\omega}\,((X_1,\eta_1,X_1^{'}, \eta_1, X_1^{''}, -\eta_1),(X_2,\eta_2,X_2^{'}, \eta_2, X_2^{''}, -\eta_2))\\
&=&\eta_2\int_0^1 X_1(t)\, dt-\eta_1\int_0^1X_2(t)\, dt +\eta_2\int_0^1 X^{'}_1(t)\, dt-\eta_1\int_0^1X^{'}_2(t)\, dt\\
&&- \eta_1\int_0^1X_2^{''}(t)\, dt+\eta_2\int_0^1 X_1^{''}(t)\, dt\\
&=&\eta_2\int_0^1(X_1+X^{'}_1+X_1^{''})(t)-\eta_1\int_0^1(X_2+X_2^{'}+X_2^{''})(t)\, dt\\
&=&0-0=0.
\end{eqnarray*}
\qed}
\newline
The reduced relation $\underline {L_3}$ can be described as
$$\underline{L_3}=\{((\mu_1,x),(\mu_2,x),(\mu_1+\mu_2,x)), x\in \mathbb R, \mu_i \in \mathbb R ^{*}\}= \mbox( graph )(+_x)\colon T^{*}\mathbb R \oplus T^{*}\mathbb R\nrightarrow T^{*}\mathbb R,$$
where $+_x$ denotes the fiber addition in the cotangent bundle of $\mathbb R$. It can be checked directly that this is a split canonical relation.

\subsection{The symplectic case} We will consider now the case where $M=\mathbb R^2$ and $\Pi=\Omega^{-1}=
  \begin{bmatrix}
    0 & 1 \\
    -1& 0
  \end{bmatrix}
$.
In this case, the space of solutions of Euler-Lagrange equations is given by
\begin{eqnarray*}
C_M&=& \{(X,\eta)\colon \frac{dX}{dt}=\Omega^{-1}\eta\}\\
&=&\{(X, \Omega \frac{d X}{dt}), X \in \mathcal C^{\infty}([0,1], \mathbb R^2)\}\\
&\cong& C^{\infty}([0,1], \mathbb R^2).
\end{eqnarray*}
\begin{proposition}
$C_M$ is split coisotropic.
\end{proposition}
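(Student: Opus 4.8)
The plan is to exhibit an explicit \emph{splitting c-triple} $(C_M, C_M^C, C_M')$ for $V=T^*\mathcal C^\infty([0,1],\mathbb R^2)$ and then invoke Theorem~\ref{Reduction} (equivalently, Definition~\ref{SplitCo_Orth}), in direct analogy with the zero-Poisson case treated above. Throughout, $\omega((X,\eta),(X',\eta'))=\int_0^1(\langle\eta,X'\rangle-\langle\eta',X\rangle)\,dt$ and $C_M=\{(X,\Omega\dot X):X\in\mathcal C^\infty([0,1],\mathbb R^2)\}$ is the graph of $X\mapsto\Omega\dot X$.

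First I would compute $C_M^\omega$. Pairing $(Y,\zeta)$ against a general $(X,\Omega\dot X)\in C_M$ and integrating $\int_0^1\langle\Omega\dot X,Y\rangle\,dt$ by parts gives $\omega((Y,\zeta),(X,\Omega\dot X))=\int_0^1\langle\zeta-\Omega\dot Y,X\rangle\,dt-\langle\Omega X(1),Y(1)\rangle+\langle\Omega X(0),Y(0)\rangle$. Taking $X$ supported in $(0,1)$, the fundamental lemma of the calculus of variations forces $\zeta=\Omega\dot Y$; the remaining boundary terms must then vanish for all values of $X(0),X(1)\in\mathbb R^2$, and since $\Omega$ is invertible this forces $Y(0)=Y(1)=0$. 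Hence $C_M^\omega=\{(Y,\Omega\dot Y):Y(0)=Y(1)=0\}$, which is closed and contained in $C_M$ (confirming coisotropy), with $\underline{C_M}=C_M/C_M^\omega$ the four-dimensional symplectic space $T^*\mathbb R^2$, via $[(Y,\Omega\dot Y)]\mapsto(Y(0),Y(1))$.

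Next I would set $C_M^C:=\{(Z,0):Z\in\mathcal C^\infty([0,1],\mathbb R^2),\ \int_0^1 Z(t)\,dt=0\}$ and $C_M':=\{(X,\eta)\in C_M:\tfrac{d\eta}{dt}\equiv 0\}$, i.e.\ the affine paths $X(t)=X(0)+t\,\Omega^{-1}\eta$ with $\eta$ constant, so $C_M'\cong\mathbb R^2\oplus(\mathbb R^2)^*$; all of these are closed. Three short checks remain. (i) $C_M^C$ is a complement to $C_M$ in $V$: given $(X,\eta)$, the path $X_1(t):=X_1(0)+\Omega^{-1}\int_0^t\eta$ lies in $C_M$ (with the same $\eta$) for the unique value of $X_1(0)$ making $\int_0^1 X_1=\int_0^1 X$, using that $\eta\mapsto\int_0^{\bullet}\eta$ is continuous on $\mathcal C^\infty$; then $Z:=X-X_1$ has zero mean and the intersection is clearly trivial. $C_M^C$ is isotropic since its elements have zero momentum component. (ii) $C_M'$ is a complement to $C_M^\omega$ in $C_M$: decompose $(X,\Omega\dot X)\in C_M$ as its affine interpolation $X_{\mathrm{aff}}(t)=(1-t)X(0)+tX(1)$, giving an element of $C_M'$, plus $X-X_{\mathrm{aff}}$, which vanishes at $0,1$ and hence gives an element of $C_M^\omega$; the intersection is trivial because an affine path vanishing at both endpoints is zero. (iii) $C_M^C\perp C_M'$: for $(Z,0)\in C_M^C$ and $(X',\eta')\in C_M'$ with $\eta'$ constant, $\omega((Z,0),(X',\eta'))=-\int_0^1\langle\eta',Z(t)\rangle\,dt=-\langle\eta',\int_0^1 Z\rangle=0$. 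By Theorem~\ref{Reduction} this makes $C_M$ split coisotropic.

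The only steps needing real care are the computation of $C_M^\omega$ — producing the boundary terms from the integration by parts and applying the fundamental lemma both on the interior and at the endpoints — and checking that $C_M^C$ is genuinely complementary, which amounts to solving for the single constant $X_1(0)$; everything else is bookkeeping, and the construction is the exact analogue of the zero-Poisson case with $\dot X\equiv 0$ replaced by $\dot X=\Omega^{-1}\eta$. Alternatively, one can deduce the statement by a change of coordinates: there is a linear symplectomorphism of $V$ leaving $\eta$ fixed and replacing $X$ by a combination of $X$, $X(0)$, $\int_0^1 X$ and the primitive $\Omega^{-1}\int_0^{\bullet}\eta$, which carries $C_M$ onto the coisotropic subspace $\{(X,\eta):\dot X\equiv 0\}$ of the zero-Poisson model, whose split coisotropicity is already established; but the direct construction above is shorter and self-contained.
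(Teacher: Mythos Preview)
Your proof is correct and follows essentially the same route as the paper: you choose the same complements $C_M^C=\{(Z,0):\int_0^1 Z=0\}$ and $C_M'=\{\text{affine paths with constant }\eta\}$ (the paper writes this as $\{(X_0+X_1t,\Omega X_1)\}$), and verify the same three conditions required by Definition~\ref{SplitCo_Orth}. Your computation of $C_M^\omega$ via integration by parts and the fundamental lemma is more explicit than the paper's, which simply states the answer, and your alternative remark about a symplectomorphism to the zero-Poisson case anticipates the method the paper uses later for general~$\Pi$.
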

\proof{ We compute the symplectic orthogonal subspace $C_M^{\omega}$:
$$C_M^{\omega}=\{((X,\eta) \in V\colon X \in \mathcal C_0^{\infty}([0,1], \mathbb R^2), \eta=\Omega \frac{dX}{dt}) \} \subset C_M,$$
where 
$$C_0^{\infty}([0,1], \mathbb R^2):=\{X \in C^{\infty}([0,1], \mathbb R^2)\colon X(0)=X(1)=0\}.$$
The symplectic reduction is 
$$\underline{C_M}=C_M/ (C_M)^{\perp}= \mathbb R^2\oplus \mathbb R^2.$$
Now consider the following complement $(C_M)^C$ of $C_M$ in V:
$$(C_M)^C:=\{(X,0)\colon \int_0^1 X(t)\, dt=0\}.$$
It is isotropic since the $\eta$-component of $(C_M)^C$ is always zero. To check that it is a complement, we consider the following disjoint decomposition of $(X,\eta)$ in $V$: If $\tilde X:= X(t)-\omega^{-1} \, \int_0^t \eta(t) \,dt$,  then
$$(X,\eta)= (\tilde X - \int_0^1 \tilde X (t)\, dt, 0)+ (\Omega^{-1}(\int_0^t \eta(t)\, dt+ \int_0^1\eta(t)\, dt), \eta)$$
Now consider the following complement $(C_M)^{'}$ of $(C_M)^{\omega}$ in $C_M$:
$$(C_M)^{'}:=\{(X_0+ X_1t, \Omega X_1)\colon X_0, X_1 \in \mathbb R^{2}\}.$$
It is a complement since the following is a disjoint decomposition of elements $(X, \Omega \frac{d X}{dt})$ in $C_M$:
\begin{eqnarray*}
(X, \Omega \frac{d X}{dt})&=&(X-(X(0)-(X(1)-X(0))t), \Omega (\frac{dX}{dt}-(X(1)-X(0)))\\
&+& (X(0)-(X(1)-X(0))t, \Omega (X(1)-X(0))),
\end{eqnarray*}
where the first summand is in $(C_M)^{\omega}$ and the second summand is in $(C_M)^{'}$.\\
Now, we check that $(C_M)^{C}$ and $(C_M)^{'}$ are symplectically orthogonal:
$$ \omega ((X, 0), (X^{'}_0+ X^{'}_1t, \Omega X^{'}_1))=\Omega X^{'}_1\int_0^1 X(t)\, dt=0.$$

\qed
}

The following is the relation $L^1_M$ for this case:
$$L_M^1=\{(X, \Omega \frac{d X}{dt})\colon X(0)=X(1)\}.$$ 
\begin{proposition} $L^1_M$ is split Lagrangian intersecting neatly $C_M$.
\end{proposition}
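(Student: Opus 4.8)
The plan is to reduce the statement to the construction of one explicit isotropic complement. First observe that $L^1_M\subseteq C_M$, since every element $(X,\Omega\tfrac{dX}{dt})$ of $L^1_M$ satisfies the Euler--Lagrange constraint defining $C_M$. Consequently, once $L^1_M$ is known to be split Lagrangian, say with splitting l-pair $(L^1_M,(L^1_M)^C)$, Proposition \ref{Trans} applied to $C_M$ (which is split coisotropic by the preceding proposition) immediately yields that $(L^1_M,(L^1_M)^C)$ and $C_M$ intersect neatly. So the whole task is to exhibit an isotropic subspace $(L^1_M)^C$ of $V$ with $V=L^1_M\oplus(L^1_M)^C$; Proposition \ref{Isosplitting} then makes both summands Lagrangian automatically.

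The isotropy of $L^1_M$ is a one-line integration by parts: for $(X,\Omega\dot X),(\tilde X,\Omega\dot{\tilde X})\in L^1_M$ the integrand of $\omega$ is $\langle X,\Omega\dot{\tilde X}\rangle-\langle\tilde X,\Omega\dot X\rangle$, which by skew-symmetry of $\Omega$ equals $\tfrac{d}{dt}\langle X,\Omega\tilde X\rangle$; integrating over $[0,1]$ and using $X(0)=X(1)$, $\tilde X(0)=\tilde X(1)$ gives $0$.

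For the complement I would take
$$(L^1_M)^C:=\Big\{(X,\eta)\in V\colon \tfrac{d\eta}{dt}\equiv 0,\ \int_0^1 X(t)\,dt=0\Big\}.$$
It is isotropic because for two such elements the $\omega$-pairing collapses to the pairing of $\int_0^1 X\,dt=0$ against a constant $\eta$, hence vanishes. To check $V=L^1_M\oplus(L^1_M)^C$, start from $(X,\eta)\in V$: matching the $\eta$-components forces the constant to be $\bar\eta:=\int_0^1\eta(t)\,dt$; then set $X_1(t):=c_0+\Omega^{-1}\int_0^t(\eta(s)-\bar\eta)\,ds$, which is smooth and, since $\int_0^1(\eta-\bar\eta)=0$, satisfies $X_1(0)=X_1(1)$; finally choose $c_0\in\mathbb{R}^2$ so that $X-X_1$ has zero mean. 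Then $(X_1,\Omega\dot X_1)\in L^1_M$, $(X-X_1,\bar\eta)\in(L^1_M)^C$, and their sum is $(X,\eta)$. Directness of the sum: an element of $L^1_M\cap(L^1_M)^C$ has $\eta$ simultaneously constant and equal to $\Omega\dot X$ with $X$ periodic, which forces $X$ constant and $\eta=0$, and then $\int_0^1 X=0$ forces $X=0$.

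Finally, Proposition \ref{Isosplitting} upgrades $L^1_M$ and $(L^1_M)^C$ to Lagrangian subspaces, so $(L^1_M,(L^1_M)^C)$ is a splitting l-pair, and Proposition \ref{Trans} (using $L^1_M\subseteq C_M$) gives the neat intersection. The only point requiring real care is the direct-sum decomposition: one must verify that the chosen $X_1$ genuinely lands in $L^1_M$ --- smoothness together with the periodicity that holds precisely because the corrected momentum $\eta-\bar\eta$ has vanishing integral --- and that the remaining $\mathbb{R}^2$ of freedom in $c_0$ is exactly enough to force the leftover $X-X_1$ into the mean-zero subspace. Everything else is routine bookkeeping.
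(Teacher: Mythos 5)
Your proof is correct and takes essentially the same route as the paper: your complement $\{(X,\eta):\eta\ \text{constant},\ \int_0^1X\,dt=0\}$ is exactly the paper's $(L^1_M)^C$, and your decomposition $X_1=c_0+\Omega^{-1}\int_0^t(\eta-\bar\eta)\,ds$ with $c_0$ chosen to make $X-X_1$ mean-zero coincides with the paper's $\Omega^{-1}\tilde\eta+\tilde X$. Your derivation of neatness from $L^1_M\subseteq C_M$ via Proposition \ref{Trans} is the same argument the paper invokes in the parallel zero-Poisson case, stated here a bit more explicitly than the paper does.
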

\proof{ $L^1_M$ is isotropic since 
$$\omega((X, \Omega \frac{d X}{dt}), (X^{'}, \Omega \frac{d X^{'}}{dt}))=X\Omega X^{'}\vert_0^1=0.$$
Its isotropic complement $(L_M^1)^C$ is given by
$$(L_M^1)^C:=\{(X,\mbox{ ct})\colon\int_0^1 X(t)\, dt=0\}.$$
It is clearly isotropic. We verify that it is in fact a complement. If we denote 
$$\tilde{\eta}(t):= \int_0^t (\eta(t)\, dt - \int_0^1 \eta (t))\, dt$$ and
$$\tilde X= \int_0^1 X(t)- \omega^{-1} \tilde{\eta}(t)\, dt$$
 $(X,\eta)$ can be decomposed as
$$(X,\eta)=(\omega^{-1} \, \tilde {\eta}+\tilde X, \eta- \int_0^1 \eta (t)\, dt)+(X- \omega^{-1} \tilde{\eta}-\tilde X , \int_0^1 \eta (t))\, dt),$$
in which the first component belongs to $L^1_M$ and the second one to $(L_M^1)^C$. It is disjoint since
\begin{eqnarray*}
&&\eta- \int_0^1 \eta (t)=\int_0^1 \eta (t)\iff  \eta \equiv 0; \\
&&\omega^{-1} \, \tilde {\eta}+\tilde X=X- \omega^{-1} \tilde{\eta}-\tilde X \iff X=2\int_0^1 X(t)\, dt\iff X\equiv 0.
\end{eqnarray*}

\qed}

The following is the description of $L^2_M$ in the symplectic case:
$$L_2=\{(X, \Omega \frac{dX}{dt}),(X^{'}, \Omega \frac{dX^{'}}{dt})\colon X(0)=X^{'}(0), X(1)=X^{'}(1)\}.$$
\begin{proposition} $L^2_M$ is a split Lagrangian subspace intersecting neatly  $C_M\oplus C_M$
\end{proposition}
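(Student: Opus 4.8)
The plan is to follow the same four-step template used above for the zero-Poisson relation $L^2_M$ and for the symplectic relation $L^1_M$: (i) observe that $L^2_M$ lies inside $C_M\oplus C_M$, so the neat intersection will be automatic once an isotropic complement is in hand; (ii) check directly that $L^2_M$ is isotropic; (iii) exhibit an explicit isotropic complement $(L^2_M)^C$ in $V\oplus\overline V$; and (iv) invoke Proposition \ref{Isosplitting} to promote ``isotropic complement'' to ``split Lagrangian'', together with Proposition \ref{Trans} — applied with the split coisotropic subspace $C_M\oplus C_M$, whose splitting c-triple is just the direct sum of that of $C_M$ with itself — to obtain the neat intersection.

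For (i) and (ii): each component of an element of $L^2_M$ has the form $(X,\Omega\dot X)$, hence lies in $C_M$, so $L^2_M\subseteq C_M\oplus C_M$. For isotropy I would reuse the identity $\omega\big((X,\Omega\dot X),(X',\Omega\dot X')\big)=X^{T}\Omega X'\big|_0^1$ established in the proof that $L^1_M$ is isotropic. Then for two elements of $L^2_M$ the form $\omega\oplus\overline\omega$ evaluates to $X_1^{T}\Omega X_2\big|_0^1-(X_1')^{T}\Omega X_2'\big|_0^1$, and since $X_i(0)=X_i'(0)$ and $X_i(1)=X_i'(1)$ the contributions at $t=0$ and at $t=1$ cancel separately.

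For (iii) I propose
$$(L^2_M)^C:=\{\,((X,\eta),(X',\eta'))\in V\oplus\overline V:\ \eta=-\eta'=\mbox{ct},\ \textstyle\int_0^1(X+X')(t)\,dt=0\,\}.$$
Isotropy of $(L^2_M)^C$ is then a one-line computation: for elements $((X_1,c_1),(X_1',-c_1))$ and $((X_2,c_2),(X_2',-c_2))$, constancy of the $c_i$ reduces $\omega\oplus\overline\omega$ to $\langle\int_0^1(X_1+X_1'),c_2\rangle-\langle\int_0^1(X_2+X_2'),c_1\rangle=0$. Disjointness from $L^2_M$: an element in both has $\Omega\dot X=\eta$ constant, so $X$ (and likewise $X'$) is affine; $\eta=-\eta'$ gives $\dot X=-\dot X'$, the endpoint matching then forces $X$ and $X'$ to be equal constants, and $\int_0^1(X+X')=0$ forces them to vanish. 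That $(L^2_M)^C$ is actually a complement is the one step with content: given $((X,\eta),(X',\eta'))$, set $c:=\tfrac12\int_0^1(\eta-\eta')$, let $Q(t):=\Omega^{-1}\!\int_0^t(\eta-\eta')-t\,\Omega^{-1}\!\int_0^1(\eta-\eta')$ (so $Q(0)=Q(1)=0$), let $P$ be the primitive of $\Omega^{-1}(\eta+\eta')$ normalised by $\int_0^1 P=\int_0^1(X+X')$, and put $Y:=\tfrac12(P+Q)$, $Y':=\tfrac12(P-Q)$; one checks that $((Y,\Omega\dot Y),(Y',\Omega\dot Y'))\in L^2_M$ and that the difference with the original element lies in $(L^2_M)^C$. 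By Proposition \ref{Isosplitting}, in the resulting decomposition $V\oplus\overline V=L^2_M\oplus(L^2_M)^C$ both summands are Lagrangian, so $L^2_M$ is split Lagrangian; and since $L^2_M\subseteq C_M\oplus C_M$, Proposition \ref{Trans} yields that $(L^2_M,(L^2_M)^C)$ and $C_M\oplus C_M$ intersect neatly.

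The only part with genuine content is the complementarity in (iii): one must split the momentum data $(\eta,\eta')$ into an exact-derivative part — which has to feed the constraint $\eta=\Omega\dot X$ on the $L^2_M$-summand — plus a constant part, and the symmetric/antisymmetric bookkeeping across the two path-copies has to be arranged so that the endpoint-matching conditions $Y(0)=Y'(0)$, $Y(1)=Y'(1)$ and the integral condition on the remainder hold simultaneously. An alternative would be to deduce the constant-Poisson case from the already-treated zero-Poisson $L^2_M$ by a change of variables, but the natural shear $(X,\eta)\mapsto(X,\eta-\Omega\dot X)$ is symplectic only modulo the boundary term $X^{T}\Omega X'\big|_0^1$, so that route would require carefully tracking boundary contributions; the direct construction above avoids this.
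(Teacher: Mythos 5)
Your argument is correct and follows the paper's overall template (direct isotropy check, explicit isotropic complement, Proposition \ref{Isosplitting} to promote the pair to a splitting l-pair, Proposition \ref{Trans} for neatness), but in the one step with real content --- the choice of complement --- you genuinely depart from the paper, and your choice is the one that actually works. The paper takes $(L^2_M)^C=\{(X,c,X',c'):\ c,c'\ \text{constant},\ \int_0^1 X=\int_0^1 X'=0\}$, with two independent constants and two separate integral conditions; this is \emph{not} a complement to $L^2_M$. Indeed, the element $(v,0,0,0)$ with $v\in\mathbb R^2$ a nonzero constant path is not in the sum: the matching condition $Z(1)=Z'(1)$ forces $c=c'$, and then $\int_0^1 Y=0$ and $\int_0^1 Y'=0$ subtract to give $v=0$; dually, the elements $\bigl(a+t\Omega^{-1}c,\,c,\,a+t\Omega^{-1}c,\,c\bigr)$ with $a=-\tfrac{1}{2}\Omega^{-1}c$ form a two-parameter family in the intersection. (Consistently with this, the first summand of the paper's displayed decomposition has initial points $\tilde X\neq\tilde X'$ in general, so it does not lie in $L^2_M$.) Your antisymmetrized version $\{(X,c,X',-c):\int_0^1(X+X')=0\}$ --- which is exactly the complement the paper itself uses for $L^2_M$ in the zero-Poisson case --- meshes correctly with the endpoint-matching constraints: $Z(1)=Z'(1)$ pins down $c=\tfrac{1}{2}\int_0^1(\eta-\eta')$, the common initial point is then fixed by the single condition $\int_0^1(Y+Y')=0$, and existence and uniqueness of the decomposition follow, as your $P,Q$ construction shows. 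Your isotropy computations and disjointness argument check out, and your appeal to Proposition \ref{Trans} (with $C_M\oplus C_M$ split coisotropic as a direct sum of split coisotropics) supplies the neat-intersection claim, which the paper's proof of this proposition leaves implicit. In short, your route repairs the complement and makes the neatness assertion explicit; the only caveat is that the uniform transport of the zero-Poisson complement works here precisely because the endpoint conditions couple the two path components antisymmetrically, which is worth stating as the reason the symmetric (independent-constants) ansatz fails.
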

\proof{ 
$L^2_M$ is isotropic since 
\begin{eqnarray*}
&&\omega \oplus \overline{\omega}(X_1, \Omega \frac{dX_1}{dt},X^{'}_1, \Omega \frac{dX^{'}_1}{dt},X_2, \Omega \frac{dX_2}{dt},X_2^{'}, \Omega \frac{dX_2^{'}}{dt})\\
&&=\int_0^1 X_1(t)\Omega  \frac{dX_2}{dt} -\int_0^1 X_2(t)\Omega  \frac{dX_1}{dt}-\int_0^1 X_1^{'}(t)\Omega  \frac{dX_2^{'}}{dt}+\int_0^1 X_1^{'}(t)\Omega  \frac{dX_2^{'}}{dt}\\
&&= X_1(1)\Omega X_2(1)-X_1(0)\Omega X_2(0)-X^{'}_1(1)\Omega X_2^{'}(1)+X_1^{'}(0)\Omega X_2^{'}(0)=0.
\end{eqnarray*}
Now, we consider the following isotropic complement $(L^2_M)^C$ of $L^2_M$ in $V \oplus \overline V$:
$$(L^2_M)^C:=\{(X,\mbox{ ct }, X^{'}, \mbox{ ct} ^{'})\colon \int_0^1 X(t)\, dt=\int_0^1 X^{'}(t)\,dt=0\}. $$ 
It is clearly isotropic; in order to check that it is a complement, we consider the following decomposition of $V\oplus \underline V$: If $(X, \eta, X^{'}, \eta^{'})$ is an element of $V \oplus \overline {V}$ and if we set
\begin{eqnarray*}
\tilde {\eta}&:=& \eta- \int_0^1 \eta(t)\, dt,\\
\tilde {\eta^{'}}&:=& \eta^{'}- \int_0^1 \eta^{'}(t)\, dt,\\
\tilde{X} &:=& \int_0^1 (X-\Omega^{-1}\int_0^t \tilde{\eta})(t)\, dt,\\
\tilde{X^{'}} &:=& \int_0^1 (X^{'}-\Omega^{-1}\int_0^t \tilde{\eta^{'}})(t)\, dt,
\end{eqnarray*}

then 
\begin{eqnarray*}
(X,\eta, X^{'}, \eta^{'})= (\Omega^{-1}\int_0^t \tilde {\eta}(t)\, dt + \tilde X, \eta-\int_0^1 \eta(t)\, dt, \Omega^{-1}\int_0^t \tilde {\eta^{'}}(t)\, dt + \tilde {X^{'}}, \eta^{'}-\int_0^1 \eta^{'}(t)\, dt)\\
+(X-\Omega^{-1}\int_0^t \tilde {\eta}(t)\, dt - \tilde X,\int_0^1 \eta(t)\, dt,  X^{'}-\Omega^{-1}\int_0^t \tilde {\eta^{'}}(t)\, dt - \tilde {X^{'}},\int_0^1 \eta^{'}(t)\, dt ).
\end{eqnarray*}
It is a disjoint decomposition since
\begin{eqnarray*}
&&\eta-\int_0^1\eta(t)\,dt=\int_0^1\eta(t)\,dt\iff \eta \equiv 0\iff \tilde{\eta}\equiv 0\iff \tilde X=\int_0^1 X(dt),\\
&& X-\Omega^{-1}\int_0^t \tilde {\eta}(t)-\tilde X=\Omega^{-1}\int_0^t \tilde {\eta}(t)\, dt+ \tilde X \iff X=2\int_0^1 X(t)\,dt \iff X\equiv 0.
\end{eqnarray*}

\qed}

The following is a description of $L^3_M$ in the symplectic case:
\begin{eqnarray*}
L^3_M&=&\{X,\Omega(\frac{dX}{dt}),X^{'}, \Omega(\frac{dX^{'}}{dt}),X^{''}, \Omega(\frac{dX^{''}}{dt})\in C_M\oplus C_M \oplus C_M\colon\\
&&X(1)=X^{'}(0), X^{''}(0)=X(0), X^{''}(1)=X^{'}(1)\}.
\end{eqnarray*}

\begin{proposition} $L^3_M$ is a split Lagrangian subspace intersecting neatly $C_M\oplus C_M \oplus C_M$.
\end{proposition}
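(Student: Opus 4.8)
The plan is to reuse, almost verbatim, the three-step template already applied to $L^1_M$ and $L^2_M$ in the symplectic case: check that $L^3_M$ is isotropic, produce an explicit isotropic complement $(L^3_M)^C$ in $V\oplus V\oplus\overline V$ (so that $L^3_M$ is split Lagrangian by Proposition \ref{Isosplitting}), and derive the neat intersection from Proposition \ref{Trans}.

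For isotropy I would take two elements $(X_i,\Omega\dot X_i,X'_i,\Omega\dot X'_i,X''_i,\Omega\dot X''_i)$, $i=1,2$, and evaluate $\omega\oplus\omega\oplus\overline\omega$ on the pair. Each of the three resulting terms has the shape $\pm\int_0^1(X_1\,\Omega\dot X_2-X_2\,\Omega\dot X_1)\,dt$, which integrates by parts — using antisymmetry of $\Omega$ — to a boundary term, so that the whole expression collapses to
$$X_1\Omega X_2\Big|_0^1+X'_1\Omega X'_2\Big|_0^1-X''_1\Omega X''_2\Big|_0^1.$$
Substituting the matching conditions $X_i(0)=X''_i(0)$, $X_i(1)=X'_i(0)$, $X'_i(1)=X''_i(1)$ cancels these six contributions in pairs, exactly as for $L^2_M$.

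For the complement, in analogy with $(L^2_M)^C$ I would try
$$(L^3_M)^C:=\Big\{(X,\mathrm{ct},X',\mathrm{ct}',X'',\mathrm{ct}'')\in V\oplus V\oplus\overline V:\ \textstyle\int_0^1 X=\int_0^1 X'=\int_0^1 X''=0\Big\},$$
with $\mathrm{ct},\mathrm{ct}',\mathrm{ct}''$ constant. Isotropy of $(L^3_M)^C$ is immediate: every term in the symplectic pairing of two of its elements carries a factor $\int_0^1 X$, $\int_0^1 X'$ or $\int_0^1 X''$, hence vanishes. Disjointness is easy as well: an element of $L^3_M\cap(L^3_M)^C$ has $\dot X,\dot X',\dot X''$ constant, so the three components are affine paths with vanishing mean, and imposing the three endpoint matchings forces them all to be zero. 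The one step with content is to show $L^3_M+(L^3_M)^C=V\oplus V\oplus\overline V$: given $(X,\eta,X',\eta',X'',\eta'')$, let the $L^3_M$-summand carry momentum $\eta-\mathrm{ct}$ in its first slot, i.e. $\bar X(t)=\bar X(0)+\Omega^{-1}\int_0^t(\eta-\mathrm{ct})$, fix $\bar X(0)$ so that $X-\bar X$ has zero mean (and likewise in the two other slots), and observe that the three endpoint-matching conditions on $(\bar X,\bar X',\bar X'')$ become an inhomogeneous linear system in $(\mathrm{ct},\mathrm{ct}',\mathrm{ct}'')$ whose coefficient matrix $\bigl(\begin{smallmatrix}-1&-1&0\\ 1&0&-1\\ 0&-1&1\end{smallmatrix}\bigr)$ has determinant $2\neq0$; this yields a unique such decomposition. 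Spelling the decomposition out in closed form — the analogue of the two-line formula used for $(L^2_M)^C$, after introducing the mean $\sigma_3$ of the three position components — is the only mildly tedious, but entirely elementary, part of the argument, and is the step I expect to be the main obstacle.

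For the neat intersection, note that $L^3_M\subseteq C_M\oplus C_M\oplus C_M$ by construction, and that $C_M\oplus C_M\oplus\overline{C_M}$ is split coisotropic in $V\oplus V\oplus\overline V$: one takes $(C_M)^C\oplus(C_M)^C\oplus\overline{(C_M)^C}$ as an isotropic complement to it in the ambient space and $(C_M)'\oplus(C_M)'\oplus\overline{(C_M)'}$ as a complement to its symplectic orthogonal inside it, orthogonality across the three blocks being automatic. Proposition \ref{Trans} then applies to the splitting l-pair $(L^3_M,(L^3_M)^C)$ and the split coisotropic subspace $C_M\oplus C_M\oplus C_M$, giving the asserted neat intersection and completing the proof.
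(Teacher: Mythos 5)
Your argument is correct and structurally identical to the paper's: isotropy via integration by parts and the endpoint matchings, an explicit isotropic complement so that Proposition \ref{Isosplitting} gives the split Lagrangian property, and neatness from the inclusion $L^3_M\subseteq C_M\oplus C_M\oplus C_M$ together with Proposition \ref{Trans} and the split coisotropy of the direct sum. The one substantive difference is the choice of complement, and there your choice is the better one. The paper takes $(L^3_M)^C$ to be the set of tuples $(\mathrm{ct},\eta,\mathrm{ct}',\eta',\mathrm{ct}'',\eta'')$ with $\int_0^1\eta=\int_0^1\eta'=\int_0^1\eta''=0$ (constant positions, zero-mean momenta) and asserts it is a complement ``similarly as the previous case''; but that subspace meets $L^3_M$ nontrivially, since for any constant $a\in\mathbb R^2$ the element $(a,0,a,0,a,0)$ satisfies all three endpoint matchings, lies in $C_M^{\oplus 3}$, and also lies in the paper's $(L^3_M)^C$. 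Your complement --- constant momenta and zero-mean positions, the exact analogue of the paper's own $(L^2_M)^C$ --- avoids this defect: as you note, an element of the intersection consists of affine paths with vanishing mean, and the three matchings then force it to be zero, while your $3\times 3$ system with determinant $2$ settles surjectivity of the decomposition (the closed-form formula you defer is only bookkeeping). Your derivation of the neat intersection from Proposition \ref{Trans} is the same mechanism the paper uses for $L^1_M$ and applies without change here.
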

\proof{
$L^3_M$ is isotropic since
\begin{eqnarray*}
&&\omega\oplus \omega \oplus \overline{\omega}((X_1,\Omega(\frac{dX_1}{dt}),X_1^{'}, \Omega(\frac{dX_1^{'}}{dt}),X_1^{''}, \Omega(\frac{dX_1^{''}}{dt}),(X_2,\Omega(\frac{dX_2}{dt}),X_2^{'}, \Omega(\frac{dX_2^{'}}{dt}),X_2^{''}, \Omega(\frac{dX_2^{''}}{dt}))\\
&&X_1(1)\Omega X_2(1)-X_1(0)\Omega X_2(0)+X^{'}_1(1)\Omega X_2^{'}(1)-X_1^{'}(0)\Omega X_2^{'}(0)-X^{''}_1(1)\Omega X_2^{''}(1)-X_1^{''}(0)\Omega X_2^{''}(0)\\
&&=0.
\end{eqnarray*}
Similarly as the previous case, the following is an isotropic complement $(L^3_M)^C$ of $L^3_M$ in $V\oplus V \oplus \overline V$:
$$
(L^3_M)^C:=\left\{ \begin{array}{lr}
(\mbox { ct },\eta,\mbox{ ct }^{'}, \eta^{'}, \mbox{ct }^{''}, \eta^{''})\in V\oplus V \oplus \overline V\colon &\\
  \int_0^1\eta(t)\,dt=\int_0^1 \eta^{'}(t)\, dt=\int_0^1\eta^{''}(t)\, dt=0\\
       \end{array} \right\}.
$$
\qed}

\subsection{The constant case}
The case where $M=\mathbb R^n$ and $\Pi$ is a constant Poisson bivector is a combination of the previous two cases. More precisely, if $\Pi$ is constant, we can decompose $\Pi$ as a direct sum 
$\Omega\oplus 0$ ,$M$ as a direct sum $M_\Omega\oplus M_0$ and $V$ as $V_{\Omega} \oplus V_{0}$. This decomposition induces a decomposition of the Euler-Lagrange space
$$C_M=C_{\Omega} \oplus C_{0}.$$
It is easy to check that the direct sum of split coisotropic subspaces and split Lagrangians subspaces  is again split, therefore the following proposition holds:
\begin{proposition}\label{constant} If $\Pi$ is a constant Poisson bivector on $M$, then
$C_M$ is a split coisotropic subspace of $V$, and similarly the evolutions relations $L_1, L_2$ and $L_3$  intersecting neatly $C_M$, $C_M\oplus C_M$ and $C_M\oplus C_M \oplus C_M$ respectively.
\end{proposition}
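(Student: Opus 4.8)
The plan is to reduce the statement to the two cases already settled above --- the zero Poisson structure and the constant non-degenerate (symplectic) structure --- by exhibiting everything in sight as a direct sum, and then invoking a general ``direct sums of splitting data are splitting data'' lemma.

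\emph{Setting up the decomposition.} A constant bivector $\Pi$ on $\mathbb R^n$ has constant rank $2m$, and in suitable linear coordinates $\mathbb R^n = M_\Omega\oplus M_0$, with $\Pi$ restricting to a constant non-degenerate bivector $\Omega$ on $M_\Omega$ and annihilating $M_0$; thus $\Pi = \Omega\oplus 0$. Passing to path spaces and cotangent bundles gives topological direct-sum decompositions $\mathcal C^\infty([0,1],M) = \mathcal C^\infty([0,1],M_\Omega)\oplus \mathcal C^\infty([0,1],M_0)$ and hence $V = T^*\mathcal C^\infty([0,1],M) = V_\Omega\oplus V_0$, and the canonical weak symplectic form on $V$ is the \emph{direct sum} $\omega_{V_\Omega}\oplus \omega_{V_0}$: there are no cross terms because the coordinate blocks are paired separately. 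Since the Euler--Lagrange equations form the first-order linear system $\frac{dX}{dt} = \Pi^\sharp\eta$, which decouples into the $\Omega$-block and the $0$-block, the Euler--Lagrange space decomposes as $C_M = C_\Omega\oplus C_0$; likewise $L^1_M, L^2_M, L^3_M$ decompose as $(L^i_\Omega)\oplus(L^i_0)$ (and the products $V\oplus\overline V$, $V\oplus V\oplus\overline V$ decompose accordingly), and the complements $C^C_M$, $C'_M$, $(L^i_M)^C$ constructed in the two special cases are exactly the direct sums of the complements built there.

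\emph{The bookkeeping lemma.} Next I would record the following, which uses only that the ambient form is a direct sum $\omega = \omega_1\oplus\omega_2$: if $(C_1,C_1^c,C_1')$ and $(C_2,C_2^c,C_2')$ are splitting c-triples in $(V_1,\omega_1)$ and $(V_2,\omega_2)$, then $(C_1\oplus C_2,\ C_1^c\oplus C_2^c,\ C_1'\oplus C_2')$ is a splitting c-triple in $(V_1\oplus V_2,\omega_1\oplus\omega_2)$; and if moreover splitting l-pairs $(L_i,L_i')$ intersect $C_i$ neatly, then $(L_1\oplus L_2,L_1'\oplus L_2')$ intersects $C_1\oplus C_2$ neatly. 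This is immediate from three facts. First, for any subspaces $(A_1\oplus A_2)^\omega = A_1^{\omega_1}\oplus A_2^{\omega_2}$, since the mixed pairings $\omega((v_1,0),(0,w_2))$ vanish; in particular $(C_1\oplus C_2)^\omega = C_1^{\omega_1}\oplus C_2^{\omega_2}$, so $C_1\oplus C_2$ is coisotropic, $C_1^c\oplus C_2^c$ is a closed complement to it, and $C_1'\oplus C_2'$ is a complement to $(C_1\oplus C_2)^\omega$ in $C_1\oplus C_2$. Second, a direct sum of isotropic subspaces sitting in symplectically orthogonal summands is isotropic, and symplectic orthogonality of $C_1^c\oplus C_2^c$ and $C_1'\oplus C_2'$ follows because both the within-summand pairings (by hypothesis) and the cross pairings (automatically) vanish; this verifies conditions (1)--(2) of Definition \ref{SplitCo_Orth}, and, together with Proposition \ref{Isosplitting}, shows $L_1\oplus L_2$ is split Lagrangian with complement $L_1'\oplus L_2'$. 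Third, intersection distributes over the direct sum, so $(L_1\oplus L_2)\cap(C_1'\oplus C_2') = (L_1\cap C_1')\oplus(L_2\cap C_2')$ and similarly for $L_i'$, whence equation \ref{neat} for $C_1\oplus C_2$ is exactly the direct sum of the neatness equations for $C_1$ and $C_2$.

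\emph{Conclusion.} Combining: by the zero-Poisson analysis carried out above (stated for $M=\mathbb R$ but valid verbatim for $M_0 = \mathbb R^{n-2m}$) the triple for $C_0$ is a splitting c-triple and the $(L^i_0)$ intersect it neatly; by the symplectic analysis above the triple for $C_\Omega$ is a splitting c-triple and the $(L^i_\Omega)$ intersect it neatly. The lemma then yields that $C_M = C_\Omega\oplus C_0$ is split coisotropic and that $L^1_M, L^2_M, L^3_M$ are split Lagrangian, intersecting neatly $C_M$, $C_M\oplus C_M$, and $C_M\oplus C_M\oplus C_M$ respectively. I expect the only genuinely delicate point to be justifying that the decomposition of these infinite-dimensional spaces is a topological direct sum compatible with the weak symplectic structure --- i.e. that $V = V_\Omega\oplus V_0$ with $\omega_V = \omega_{V_\Omega}\oplus\omega_{V_0}$, and that $C_M$, the $L^i_M$ and their complements respect it; once this is in place, everything else is the routine linear algebra of the lemma.
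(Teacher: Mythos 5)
Your proposal is correct and follows exactly the paper's route: decompose $\Pi=\Omega\oplus 0$, $M=M_\Omega\oplus M_0$, $V=V_\Omega\oplus V_0$, hence $C_M=C_\Omega\oplus C_0$, and reduce to the zero and symplectic cases via the fact that direct sums of splitting c-triples, split Lagrangians, and neat intersections are again of the same kind. Your ``bookkeeping lemma'' simply makes explicit what the paper dismisses as ``easy to check,'' so there is nothing further to add.
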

\subsection{The general case}
We will prove that the above considerations for $C_M$ and the relations $L^1_M, L^2_M, L^3_M$ can be naturally extended for any Poisson bivector field $\Pi$ on $M=\mathbb R^n$. In \cite{Dual}, the linearized version of the Euler-Lagrange equations is modified via  a change of coordinates, and the resulting Euler-Lagrange space is identified with the solutions $(\lambda, \varphi)$ for the equation
$\frac{d\lambda}{dt}= -\tilde P^{\sharp} {\varphi}$
where $\tilde P^{\sharp} $ is constant. The change of coordinates leaves invariant the weak symplectic structure of $V$ and hence the splitting condition for  coisotropic and Lagrangian subspaces is preserved. \\
In order to show this, let us briefly recall the change of coordinates from \cite{Dual}. If we fix an element $(X,\eta)$ in $C_M$ and if we choose a linear connection $\nabla$ on $M$, it induces a connection $\nabla^{*}$ in $X^{*}TM$, described in local coordinates by the following exterior covariant derivative $\partial$:
\begin{eqnarray*}
\partial\colon \Gamma (X^*TM)&\to& \Gamma (T^*[0,1] \otimes X^{*}TM)\colon\\
\sigma^i &\mapsto& \partial \sigma^i:= d\sigma^i + \Gamma^i_{rs} dx^r\sigma ^s,
\end{eqnarray*}
where $\Gamma ^i_{rs}$ are the Christoffel symbols for $\nabla$.
We can now deform this connection using the Poisson bivector $\Pi$, namely, if we denote 
$$A:= \nabla^{*} \Pi^{\sharp}(X) \eta $$ described locally by
$$(\nabla^{*} \Pi)^{ij}_k=\partial_k \Pi^{ij}+ \Gamma^i_{kr}\Pi^{rj}+ \Gamma^j_{kr}\Pi^{ir},$$
we can construct the covariant derivative 
$$D:= \partial +A.$$
Proposition 2.1 in \cite{Dual} shows that the space $T_{(X,\eta)}C_M$ can be written as follows:
\begin{eqnarray}
T_{(X,\eta)}C_M&=&\{(\xi, e) \in \Gamma (X^*TM)\oplus \Gamma(T^{*}[0,1]\otimes X^{*}T^{*}M)\colon\\
 && D\xi=-\Pi^{\sharp}(X)e \label{Euler}\}.
\end{eqnarray}
Furthermore, if $U$ denotes the parallel transport of $D$, the following change of coordinates:
\begin{eqnarray}
\xi&\mapsto& \lambda:= U \xi \in \Omega^0 ([0,1], T_{X(0)}M)\\
e &\mapsto& \varphi:=(U^{t})^{-1}e \in \Omega^1([0,1], T^{*}_{X(0)}M)\\
\Pi^{\sharp}&\mapsto& P^{\sharp}:= U\Pi^{\sharp}U^t \in \Omega^0([0,1], \mbox{ Hom }(T^{*}_{X(0)}M, T_{X(0)}M)),
\end{eqnarray}
maps the equation \ref{Euler} to 
\begin{equation}\frac{d\lambda}{dt}= -P^{\sharp} \varphi. \label{New}
\end{equation}
Lemma 2.2 in \cite{Dual} states that  if $\Pi$ is a Poisson bivector, then $P$ is skew symmetric and constant.
This implies that the space $C_M(\lambda, \varphi)$ of solutions of Equation \ref{New} can be identified with the Euler Lagrange space $C_M \subset V(\lambda, \varphi)$ for the constant Poisson structure $P^{\sharp}$, thus it  is split coisotropic by Proposition \ref{constant}. 
By using the fact that the space $V(\lambda, \varphi)$ is weak symplectomorphic to $V(X,\eta)$ using the changes of coordinates stated above, the following result holds:
\begin{theorem}
Let $M=\mathbb R^n$ and $\Pi$ be a Poisson bivector on $M$. Then the linearized Euler-Lagrange space $T_{(X,\eta)}C_M$ is a split coisotropic subspace of  $V(X,\eta)$, for all $(X,\eta) \in C_M$.
\end{theorem}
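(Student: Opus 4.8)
The plan is to reduce the general case to the constant case, Proposition \ref{constant}, by means of the change of coordinates recalled above from \cite{Dual}, once two auxiliary facts have been established: that this change of coordinates is a weak symplectomorphism, and that being split coisotropic is invariant under weak symplectomorphisms.

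First I would record the invariance statement, which is purely formal. If $\Phi\colon V\to V'$ is a linear isomorphism intertwining the weak symplectic forms and $(C,C^c,C')$ is a splitting c-triple in $V$, then $\Phi$ carries $C^\omega$ to $\Phi(C)^\omega$, carries the decomposition $V=C^c\oplus C^\omega\oplus C'$ to the analogous decomposition of $V'$, and preserves isotropy and symplectic orthogonality because it preserves $\omega$; hence $(\Phi(C),\Phi(C^c),\Phi(C'))$ is a splitting c-triple in $V'$, and $C$ is split coisotropic in $V$ if and only if $\Phi(C)$ is split coisotropic in $V'$.

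Next I would check that the map
\[
(\xi,e)\longmapsto(\lambda,\varphi)=\bigl(U\xi,\;(U^t)^{-1}e\bigr)
\]
is an isomorphism $V(X,\eta)\to V(\lambda,\varphi)$ of weak symplectic vector spaces. The weak symplectic pairing on $V$ is obtained from the fibrewise duality pairing $\langle\,\cdot\,,\cdot\,\rangle$ between $\Gamma(X^*TM)$ and $\Gamma(T^*[0,1]\otimes X^*T^*M)$ by integrating over $[0,1]$; since $(U^t)^{-1}$ is, fibrewise, the inverse transpose of $U$, one has $\langle(U^t)^{-1}e,\,U\xi\rangle=\langle e,\xi\rangle$ pointwise, so the pairing, and hence $\omega$, is preserved. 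By Proposition 2.1 and Lemma 2.2 of \cite{Dual}, this isomorphism sends $T_{(X,\eta)}C_M$, cut out by \eqref{Euler}, onto the solution space $C_M(\lambda,\varphi)$ of \eqref{New}, in which $P^\sharp$ is constant and skew-symmetric. A constant linear change of frame on $T_{X(0)}M$ adapted to $\ker P$ writes $P^\sharp=\Omega\oplus 0$ with $\Omega$ invertible skew; this is a further linear symplectomorphism of $V(\lambda,\varphi)$, under which $C_M(\lambda,\varphi)$ becomes the product of the Euler--Lagrange space for a constant symplectic structure and that for the zero structure. By Proposition \ref{constant} this product is split coisotropic, and pulling the resulting splitting c-triple back through the two symplectomorphisms above gives a splitting c-triple for $T_{(X,\eta)}C_M$ in $V(X,\eta)$.

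The main obstacle is the verification in the third paragraph: one must be sure that the change of coordinates of \cite{Dual} really intertwines the weak symplectic forms and not merely the Euler--Lagrange equations, and in particular that the asymmetry of the transformation ($U$ on the position factor, $(U^t)^{-1}$ on the momentum factor) is exactly what makes the fibrewise pairing invariant. A secondary technical point is that the decomposition $P^\sharp=\Omega\oplus 0$ requires a constant linear change of frame on $T_{X(0)}M$; one should check that this extends to a symplectomorphism of $V(\lambda,\varphi)$ compatible with the splitting of $C_M(\lambda,\varphi)$, which is routine since everything involved is constant-coefficient and linear.
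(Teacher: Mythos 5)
Your proposal follows the same route as the paper: reduce to the constant case of Proposition \ref{constant} via the change of coordinates $(\xi,e)\mapsto(U\xi,(U^t)^{-1}e)$ from \cite{Dual}, using that it intertwines the weak symplectic structures and that the split coisotropic property is invariant under linear symplectomorphisms. You are in fact somewhat more explicit than the paper, which merely asserts that the change of coordinates preserves $\omega$, whereas you verify it via the fibrewise duality pairing; the argument is correct.
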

In \cite{Relational} it is shown that the evolution relations $L^1_M, L^2_M$ and $L^3_M$ can be written only in terms of $C_M$ and its symplectic orthogonal $C_M^{\omega}$. A similar argument as before proves that
\begin{theorem}\label{StrongRel}
The tangent spaces of the evolution relations $T L^1_M, TL^2_M$ and $T L^3_M$ described by
\begin{eqnarray*}
TL^1_M&=&\{(\delta X, \delta \eta) \in TC_M \colon  \exists \tilde X= \mbox {ct }, \tilde {\eta} \in \ker{\Pi^{\sharp}}: (\delta X -\delta \tilde X, \delta \eta- \delta \tilde{\eta})\in (TC_M)^{\omega})\}\\
TL^2_M&=& \{((\delta X, \delta \eta), (\delta \tilde X, \delta \tilde{\eta})) \in TC_M\oplus TC_M: (\delta X-\delta \tilde X, \delta \eta- \delta \tilde{\eta})\in (TC_M)^\omega\}\\
TL^3_M&=&\{((\delta X, \delta \eta),(\delta \tilde X, \delta \tilde{\eta} ),(\delta\stackrel{\approx}{X}, \delta \stackrel{\approx}{\eta})) \in TC_M\oplus TC_M\oplus TC_M\\
&:& ((\delta X, \delta \eta)*(\delta \tilde X, \delta \tilde{\eta} ),(\delta\stackrel{\approx}{X}, \delta \stackrel{\approx}{\eta}) )\in TL^2_M\},
\end{eqnarray*}
 are split Lagrangian subspaces and they intersect neatly  $TC_M, TC_M\oplus TC_M$ and $TC_M \oplus TC_M \oplus TC_M$ respectively, for any Poisson bivector $\Pi$.
\end{theorem}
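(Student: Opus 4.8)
The plan is to deduce the general statement from the constant case established in Proposition~\ref{constant}, by transporting all the relevant subspaces through the change of coordinates of \cite{Dual} recalled just above. Write $\Phi$ for this change of coordinates, i.e.\ the map $\xi\mapsto\lambda=U\xi$, $e\mapsto\varphi=(U^{t})^{-1}e$, with $U$ the parallel transport of the deformed covariant derivative $D=\partial+A$ along the fixed base field $X$. Since $U(t)$ is an invertible matrix depending smoothly on $t$, fibrewise multiplication by $U$ and by $(U^{t})^{-1}$ defines a continuous linear isomorphism $\Phi\colon V(X,\eta)\to V(\lambda,\varphi)$ with continuous inverse, and the computation in \cite{Dual} shows that $\Phi$ intertwines the weak symplectic forms; hence $\Phi$ is a weak symplectomorphism. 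By Proposition~2.1 and Lemma~2.2 of \cite{Dual} it maps $T_{(X,\eta)}C_M$ onto the Euler--Lagrange space $C_M(\lambda,\varphi)$ of the \emph{constant} skew-symmetric structure $P^{\sharp}=U\Pi^{\sharp}U^{t}$, and, being symplectic, it maps $\bigl(T_{(X,\eta)}C_M\bigr)^{\omega}$ onto $\bigl(C_M(\lambda,\varphi)\bigr)^{\omega}$.

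First I would record the elementary invariance of all the relevant notions under a weak symplectomorphism $\Psi\colon V_1\to V_2$: since $\Psi$ and $\Psi^{-1}$ are continuous linear isomorphisms preserving $\omega$, $\Psi$ sends isotropic subspaces to isotropic subspaces, closed complements to closed complements and symplectically orthogonal pairs to symplectically orthogonal pairs, and it commutes with finite intersections and sums. Together with Proposition~\ref{Isosplitting}, this shows that $\Psi$ carries split Lagrangian subspaces to split Lagrangian subspaces, splitting c-triples to splitting c-triples, and preserves the neat-intersection equation~(\ref{neat}).

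Next I would check that $\Phi$ carries the general-case relations $TL^1_M,\,TL^2_M,\,TL^3_M$ onto the constant-case relations $L^1,\,L^2,\,L^3$ of the Poisson sigma model with constant bivector $P$ on $\mathbb R^{n}$. As displayed in the statement, each $TL^i_M$ is built only out of $TC_M$, $(TC_M)^{\omega}$, the subspace of constant position fields together with $\ker\Pi^{\sharp}$ (for $TL^1_M$), and the path concatenation $*$ (for $TL^2_M$ and $TL^3_M$). We have just seen that $\Phi$ is compatible with the first two ingredients; the change-of-coordinates formulas of \cite{Dual} translate the ``constant / $\ker\Pi^{\sharp}$'' data into the corresponding data for $P$; and, at the level of $C_M$, $\Phi$ is compatible with concatenation because $U$ is the parallel transport along $X$, so that the ``good'' representatives $(\tilde X,\tilde\eta)$ of \cite{Symplectic} used to smooth the concatenation transform consistently. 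Hence $\Phi(TL^i_M)=L^i$, and by Proposition~\ref{constant} each $L^i$ is split Lagrangian and intersects neatly $C_M\oplus\cdots\oplus C_M$ ($i$ copies).

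Finally, applying the invariance of the second step to $\Phi^{-1}$ gives that $TL^1_M,\,TL^2_M,\,TL^3_M$ are split Lagrangian and intersect neatly $TC_M$, $TC_M\oplus TC_M$ and $TC_M\oplus TC_M\oplus TC_M$ respectively, as asserted. The main obstacle I anticipate is the third step: one must verify with care that $\Phi$ really does intertwine the general-case relations with the constant-case ones, since the definitions of $L^2_M$ and especially $L^3_M$ involve the concatenation operation and the auxiliary smoothing of \cite{Symplectic}, and their compatibility with the parallel-transport change of coordinates needs to be made explicit. The symplectic-linear bookkeeping in the remaining steps is routine, given the results already established in the excerpt --- in particular the preceding theorem that $T_{(X,\eta)}C_M$ is split coisotropic, whose proof runs along exactly the same lines.
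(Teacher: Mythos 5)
Your proposal follows essentially the same route as the paper: the paper's (very terse) argument is precisely to note that the relations $TL^i_M$ are expressible purely in terms of $TC_M$ and $(TC_M)^{\omega}$, that the parallel-transport change of coordinates of \cite{Dual} is a weak symplectomorphism onto the constant-case data, and that the result then follows from Proposition \ref{constant} together with the invariance of split Lagrangian and neat-intersection conditions under weak symplectomorphisms. Your write-up is in fact more explicit than the paper's one-sentence proof, and the point you flag as delicate (compatibility of the coordinate change with the concatenation in $L^2_M$ and $L^3_M$) is a genuine gap the paper leaves implicit as well.
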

\begin{remark}
A possible further applications in field theory of the formalism of split Lagrangian and coisotropic spaces  is to consider the cases in which the symplectic structure is strong. In order to achieve that, one has to restrict to the case in which the spaces of fields are Sobolev spaces, and we can describe them as reflexive spaces. A natural example is quantum mechanics and  scalar field theory on Riemannian manifolds, in which the space of states is chosen to be $l_2$ functions, rather than $C^{\infty}$-functions.  This viewpoint imposes some conditions over the solutions of the Euler-Lagrange equations, possibly allowing weak solutions to the $EL(M)$-spaces. In several cases, the spaces of weak and strong solutions coincide, but there are several analytical issues to be consider while undertsanding the symplectic formulation of weak solutions of PDE.
\end{remark}
\remark{(Split canonical relations and symplectic groupoids).
In \cite{Relational, Con}, the evolution relations for the Poisson sigma model are used to construct an integration of the Poisson manifold $M$. These relations are the analogues of the graphs of the unit, identity and multiplication maps of a conventional symplectic groupoid $G\rightrightarrows M $ integrating the Lie algebroid $T^*M$. Theorem  \ref{StrongRel} shows that the relational symplectic groupoid associated to any Poisson manifold $M$ via the Poisson sigma model  is split, in the sense that its associated canonical relations $L^1, L^2$ and $L^3$ are split.}

\begin{remark}By  using smooth sections of the cotangent bundle of the path space, we consider the Poisson sigma model in terms of weak symplectic Fr\'echet spaces. However, in the path Sobolev space $L_2^1([0,1], \mathbb R)$ of $L_2$-differentiable paths, the canonical symplectic form on the cotangent bundle is strong symplectic. The price to pay here is that the Euler-Lagrange space $EL(M)$ must be redefined to allow weak solutions of the O.D.E.  given by Equation \ref{EL}.
\end{remark}


\end{document}